\let\TeXchi\chi
\newbox\chibox
\chibox \hbox{\raise\dp0 \box 0 }
\def\chi{\copy\chibox}
\newtheorem{proposition}{Proposition}[section]
\newtheorem{theorem}{Theorem}[section]
\newtheorem{definition}{Definition}[section]
\newtheorem{example}{Example}[section]
\newtheorem{lemma}{Lemma}[section]
\newtheorem{remark}{Remark}[section]
\numberwithin{equation}{section}
\numberwithin{theorem}{section}
\numberwithin{definition}{section}
\numberwithin{example}{section}
\numberwithin{proposition}{section}
\numberwithin{lemma}{section}
\numberwithin{remark}{section}
\begin{document}
\title{On structured spaces and their properties}
\author
{Manuel Norman}
\date{}
\maketitle
\begin{abstract}
\noindent In this paper we introduce a new kind of topological space, called 'structured space', which locally resembles various kinds of algebraic structures. This can be useful, for instance, to locally study a space that cannot be globally endowed with an algebraic structure using tools from algebra. The definition of these spaces will be made more precise via one of our main result, which involves the 'structure map'. This will also lead us to a rigorous and unambiguous definition of algebraic structure. After showing some examples which naturally arise in this context, we study various properties and develop some theory for these new spaces; in particular, we consider partitions (with respect to some measure $\mu$). We then prove one of the most important Theorem of this paper (Theorem \ref{Thm:4.1}), which states that every structured space, under some assumptions, induces a lattice, and conversely every lattice induces a structured space satisfing such hypothesis. We conclude with some relations with connected spaces.
\end{abstract}
\let\thefootnote\relax\footnote{Author: \textbf{Manuel Norman}; email: manuel.norman02@gmail.com\\
\textbf{AMS Subject Classification (2010)}: 08A05, 22A30\\
\textbf{Key Words}: structured space, algebraic structure, structure map, lattice}
\section{Introduction}
\noindent We first recall some basic facts about algebraic structures (see also the references). When we say 'algebraic structure', we usually refer to a set endowed with some operations (and satisfing some properties). Here, we will usually assume that an algebraic structure has at least one operation and satisfies at least one property (to better understand this last part, we will need the structure map). Among the possible examples, we have groups and group-like structures (magmas, semigroups, loops, monoids, ...),  rings and ring-like structure (semirings, integral domains, fields, ...), algebras, ... We can also add some other structures, obtaining, for instance, topological groups, Lie algebras, $C^*$-algebras (where we also define a norm), ...\\
For now, we will use this idea of algebraic structure to develop our basic theory. We will then give a precise definition of algebraic structure, and also of structured space.\\
We start with the following definition of neighborhood (sometimes, other definitions are considered; here a neighborhood in not assumed to be open, as Definition \ref{Def:1.1} shows):
\begin{definition}\label{Def:1.1}
Given a topological space $X$ (say, with topology $\tau$), we say that $U_p \subseteq X$ is a neighborhood of $p \in X$ if it contains an open set which contains $p$.
\end{definition}
We can now give a first definition of what a structured space is (later in this section, we will give a reason why these spaces are interesting to study):
\begin{definition}[Structured space; first version] \label{Def:1.2}
Let $X$ be a topological space. If every point $p\in X$ has at least one neighborhood $U_p \varsupsetneq \lbrace p \rbrace$ which is an algebraic structure, then $X$ is said to be a structured space.
\end{definition}
This first definition is still not so precise; the aim of the first part of this paper is to make unambiguous this notion. We first note that, when we consider a structured space, we also fix some neighborhood $U_p$ (and also their chosen algebraic structure) for every $p \in X$. This means that, if for instance $p$ has two neighborhoods $U_p$, $\widehat{U}_p$ which can be assigned the structure of a ring or a group, and of a vector space, we choose and fix one of these neighborhoods and one the possible structures. So, a structured space has also some assigned (fixed) neighborhoods $U_p$ with fixed algebraic structures. We can thus consider (just for now; this will be improved with the structure map) a structured space as a couple $(X, \lbrace U_p \rbrace )$ with fixed $U_p$'s and fixed structures. This distinguishes among structured spaces obtained from the same topological space $X$ but choosing different neighborhoods and structures, and it is indeed more precise than Definition \ref{Def:1.2}. We also notice that we assume that the fixed $U_p$'s have more than one point: this way we avoid "trivial" algebraic structures.\\
We note that the definition of structured space says that $X$ can be locally represented by different algebraic structures. It is also clear that every algebraic structure endowed with any topology is a structured space: just take $U_p \equiv X$ for every $p \in X$. Another important note is that, when the local structures of the fixed $U_p$'s require a topology, we can use different topologies from the one on $X$, temporary dropping the global topological structure of $X$ and considering only the local topological structure of $U_p$. Another way to see this is by using the notion of bitopological space introduced by Kelly in [6]. We will usually consider, however, the temporary drop of the structure when necessary.\\
We give a simple example of construction of structured space:
\begin{proposition}\label{Prop:1.1}
Consider a collection of algebraic structure $\mathcal{C}=\lbrace A_j \rbrace$. Letting $X:=\bigcup_j A_j$, if we define on $X$ a topology $\tau$ such that every $A_j$ belongs to $\tau$, then $X$ is a structured space (a method to construct a topology with these properties is shown in Example \ref{Ex:1.1}).
\end{proposition}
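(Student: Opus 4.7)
The plan is to verify Definition \ref{Def:1.2} directly: for every $p\in X$ I exhibit a neighborhood $U_p$ of $p$ that strictly contains $\{p\}$ and carries an algebraic structure. The natural candidate, given the hypotheses, is one of the sets $A_j$ that happens to contain $p$.

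More precisely, I would proceed as follows. First, fix an arbitrary $p\in X$. Since $X=\bigcup_j A_j$, there exists at least one index $j=j(p)$ such that $p\in A_{j(p)}$. By hypothesis $A_{j(p)}\in\tau$, so $A_{j(p)}$ is an open set containing $p$. Applying Definition \ref{Def:1.1} with $A_{j(p)}$ in the role of both the neighborhood and the open set witnessing that it is one, we conclude that $A_{j(p)}$ is a neighborhood of $p$. Moreover, $A_{j(p)}$ is an element of the collection $\mathcal{C}$, hence by assumption an algebraic structure. Setting $U_p:=A_{j(p)}$ (together with its fixed algebraic structure, as prescribed by the discussion following Definition \ref{Def:1.2}) therefore yields a neighborhood of $p$ carrying an algebraic structure.

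It remains only to verify the strict inclusion $U_p\varsupsetneq\{p\}$. This follows from the convention, stated explicitly in the paragraph after Definition \ref{Def:1.2}, that the fixed neighborhoods chosen for a structured space have more than one point, which is ensured here because each $A_j$ is a (nontrivial) algebraic structure. Since $p\in X$ was arbitrary, every point of $X$ admits a neighborhood of the required type, so $X$ is a structured space in the sense of Definition \ref{Def:1.2}.

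I do not foresee any genuine obstacle: the argument is essentially an unpacking of the definitions, and the only point that requires a moment's care is the strict containment $U_p\varsupsetneq\{p\}$, which is handled by appealing to the nontriviality convention on algebraic structures. If one wished to be fully explicit, one could instead hypothesize in the statement that each $A_j$ has at least two points; this makes the last step immediate.
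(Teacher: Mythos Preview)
Your proof is correct and follows essentially the same approach as the paper's own proof, which simply observes that any $x\in X$ lies in some $A_j$, which is open by hypothesis and hence serves as the required neighborhood with an algebraic structure. You have merely spelled out the details more carefully, including the strict containment $U_p\varsupsetneq\{p\}$, which the paper leaves implicit.
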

\begin{proof}
It is clear that, if $x \in X$, we can always choose at least one (open, by the assumptions of this Proposition) neighborhood of $x$ with a certain algebraic structure (and then we fix it). Hence, the topological space $X$ is a structured space.
\end{proof}
\begin{example}\label{Ex:1.1}
\normalfont Consider the sets $[0,1]$, $GL(n, \mathbb{R})$ ($n \geq 1$), $L^2(\mathbb{R}, \lambda)$ (where $\lambda$ denotes Lebesgue measure) endowed with the following algebraic structures, respectively: magma, Lie group, vector space. Then, consider the space $X:=[0,1] \cup GL(n, \mathbb{R}) \cup L^2(\mathbb{R},\lambda)$. We define on it a topology as follows: we start to construct $\tau$ by letting it have $X$, $\emptyset$, $[0,1]$, $GL(n, \mathbb{R})$, $L^2(\mathbb{R}, \lambda)$. We then need that every finite intersection of element of $\tau$ still belongs to $\tau$, and also that every (also infinite) union of elements of $\tau$ still belongs to $\tau$. So first add to $\tau$:\\
$$[0,1] \cup GL(n, \mathbb{R})$$\\
$$[0,1] \cup L^2(\mathbb{R}, \lambda)$$\\
$$L^2(\mathbb{R}, \lambda) \cup GL(n, \mathbb{R})$$\\
(In a more general setting, consider all the possible unions of the elements added before this step). It is clear that the union of the sets added up to now to $\tau$ always belongs to $\tau$. Call the collection with only these sets $\mathcal{A}$. Now consider the collection:
$$\mathcal{B}:=\lbrace \, \bigcap_{j=1}^{k} A_j, A_j \in \, \mathcal{A}, k \in \mathbb{N} \setminus \lbrace 0 \rbrace, k<+\infty \rbrace$$
If we define now the collection $\tau$ of all the possible (also infinite) unions of elements in $\mathcal{B}$, it is clear that this is a topology (it is the smallest topology on $X$ such that the collection considered at the beginning (the one consisting of the fixed neighborhoods together with $X$ and $\emptyset$) is contained in it). Indeed, $X$ and $\emptyset$ belong to $\tau$ by construction, and always by construction this collection is closed under (also infinite) unions. Then, since intersections of unions are unions of intersections, we can conclude that finite intersections of sets in $\tau$ belong to $\tau$, which ends the proof that $\tau$ is a topology. Hence (by what we said at the beginning of this example, and by the previous Proposition) $X$ is a structured space.
\end{example}
\begin{example}\label{Ex:1.2}
\normalfont We also give an example of a structured space with "the same kinds of elements", in this case matrices. Let:
$$X:= \bigcup_{n \in \mathbb{N} \setminus \lbrace 0 \rbrace} GL(n, \mathbb{R})$$
and assign some algebraic structures to each of these sets. For instance, consider all the $GL(n, \mathbb{R})$ to be groups when $n$ is odd, and to be Lie groups when $n$ is even. It is clear that, by considering the topology $\tau$ of Example \ref{Ex:1.1} (which can be easily generalised also to these cases) on $X$, we conclude by Proposition \ref{Prop:1.1} that this space is a structured space.
\end{example}
We now give a reason to introduce this new kind of space: it is clear, in particular by Example \ref{Ex:1.2} (and also the ones below), that many spaces cannot be endowed with a certain algebraic structure \textit{globally} (in the said example, we cannot sum matrices with different orders), but they could be endowed \textit{locally} with various kinds of structures. Furthermore, spaces like the one in Example \ref{Ex:1.2} can be interesting to study for various purposes. Thus, a space which cannot be endowed globally with a certain structure but which is important, for some reasons, to study, could be endowed locally with various structures. Then, we can use tools from algebra to analyse \emph{locally} these spaces.
\begin{example}\label{Ex:1.3}
\normalfont Based on this motivation and on Example \ref{Ex:1.2}, we give other important examples of structured spaces. We recall the following notations:\\
$\bullet$ $SL(n, F)$ is the special linear group of (matrices of) degree $n$ over the field $F$;\\
$\bullet$ $O(p,q)$ is the indefinite orthogonal (Lie) group;\\
$\bullet$ $SO(p,q)$ is the indefinite special orthogonal group;\\
$\bullet$ $O(n,F)$ is the (general) orthogonal group;\\
$\bullet$ $SO(n,F)$ is the special orthogonal group;\\
$\bullet$ $Sp(n,F)$ is the sympletic group;\\
$\bullet$ $SU(n)$ is the special unitary (Lie) group; ...\\
It is clear that all these well known groups are important, and it is also clear that $\bigcup_{n \in \mathbb{N} \setminus \lbrace 0 \rbrace} SL(n,F)$, $\bigcup_{n \in \mathbb{N} \setminus \lbrace 0 \rbrace} SO(n,F)$, ... are important spaces. However, these unions cannot be clearly groups under the same operations of the "components" $SL(n,F)$, $SO(n,F)$, ... (we cannot sum matrices with different orders). Since these spaces are important, it would be interesting to study them \textit{locally} with some kinds of algebraic structures. The notion of structured spaces is then really useful in such situations. Defining a topology as in Proposition \ref{Prop:1.1}, all the above unions become structured spaces. Moreover, also the space $\bigcup_{n \in \mathbb{N} \setminus \lbrace 0 \rbrace} \mathbb{R}^n$ is important to study, but it is clear that it is not a "classical" algebraic structure (we cannot sum vectors with different numbers of components). Even in this case, it can be easily seen that this is a structured space (and since $\mathbb{R}^n$ can be endowed with many different kinds of structures, we can consider various local structures and obtain many different structured spaces, all with the same topological space $\bigcup_{n \in \mathbb{N} \setminus \lbrace 0 \rbrace} \mathbb{R}^n$).
\end{example}
\begin{example}\label{Ex:1.4}
\normalfont A really interesting example of structured space is the following one. Consider the complex projective spaces $\mathbb{CP}^n$ (see [37-48] for more on projective spaces). It is known (see [49-50]) that a cell structure (for CW-complexes, see [52-59]) for $\mathbb{CP}^n$ can be constructed using the fact that $\mathbb{CP}^n \setminus \mathbb{CP}^{n-1}$ is homeomorphic (for $n=1, ...$) to an open $2n$-ball in $\mathbb{R}^{2n} \cong \mathbb{C}^n$ ($\cong$ indicates isomorphism here). Since a $2n$-open ball is homeomorphic to $\mathbb{R}^{2n} \cong \mathbb{C}^n$, we can say that:
$$ \mathbb{CP}^n = \mathbb{C}^n \cup \mathbb{C}^{n-1} \cup ... \cup \mathbb{CP}^1 $$
$\mathbb{CP}^1$ is homeomorphic to the Riemann sphere, and it can be written (via the homeomorphism above) as $\mathbb{C} \cup \lbrace \infty \rbrace$. We would like to \textit{locally} endow these spaces with some algebraic structures. Consider all the $\mathbb{C}^n$, $n \geq 2$ as vector spaces over $\mathbb{C}$. The unique problem is the Riemann sphere $\mathbb{CP}^1$. We have two possibilities:\\
1) We define 1-generalised structured spaces as topological spaces $X$ such that $X \setminus \lbrace \phi \rbrace$ (for some fixed $\phi \in X$) is a structured space. 1-generalised spaces are not structured spaces because they have a fixed neighborhood, namely $\lbrace \phi \rbrace$ (to which we assign some kind of algebraic structure), with only one point. It is not difficult to see, however, that many results for structured spaces also hold for 1-generalised spaces. Theorem \ref{Thm:4.1} is not anymore available, because $h$ cannot be surjective if a fixed neighborhood contains only one point (see Section 4). We will not usually deal with 1-generalised spaces, but they are clearly similar to structured spaces, and so they are also interesting to study. Returning to our example, $\mathbb{CP}^1$ can be written as $\mathbb{C} \, \cup \, \lbrace \infty \rbrace$, and hence we can endow the $\mathbb{C}^n$'s with the structure of vector spaces over $\mathbb{C}$, while $\lbrace \infty \rbrace$, which is a 1-generalised structured space, can be endowed with any algebraic structure, say, for instance, a magma with operation $+$, where the unique property considered is closure under $+$ (define $\infty + \infty := \infty$). Consequently, it is clear that $\mathbb{CP}^n$ is, for all $n \geq 0$, a 1-generalised structured space.\\
2) The other possiblity is to add an element $\xi$ to $\mathbb{CP}^1$, where $0/0:= \xi$, so that $\mathbb{CP}^1 \cup \lbrace \xi \rbrace$ is a wheel (see [51] for this kind of algebraic structure). This way, considering $\mathbb{C}^n$ for $n \geq 2$ as vector spaces over $\mathbb{C}$, $\mathbb{CP}^n \, \cup \, \lbrace \xi \rbrace$ is a structured space in the usual sense.\\
Thus, these spaces can be viewed as structured spaces/1-generalised structured spaces. Moreover, the space $\mathbb{CP}^{\infty}$, given as the union $\bigcup_{n \geq 1}  \mathbb{CP}^n$ (or as the direct limit of these spaces), plays an important role in various situations. For instance, $\mathbb{CP}^{\infty}$ can be seen as the classifying space $BU(1)$ of $U(1)$, and also as the Eilenberg-MacLane space $K(\mathbb{Z},2)$ (see [60-76]). It is clear, by its definition and by the arguments above, that this space is a 1-generalised structured space or, adding a point $\xi$, as a structured space in the usual sense. Consequenly, $BU(1)$ and $K(\mathbb{Z},2)$ can be also studied as either 1-generalised structured spaces or, adding a point $\xi$, as structured spaces.
\end{example}
\begin{example}\label{Ex:1.5}
\normalfont Consider the tangent bundle of a differentiable manifold $M$ (see [3]):
$$ TM:= \bigsqcup_{x \in M} T_x M $$
where $T_x$ is the tangent space to $M$ at $x$. It is well known that this is an example of vector bundle. Notice that, since $T_x M$ can be endowed with the structure of a vector space for all $x \in M$, each set $T_x M \times \lbrace x \rbrace$ (which appears by definition of disjoint union) can be endowed with the same structure: the definition of sum and scalar multiplication simply does not affect the second component (which is always $x$), that is, if $a,b \in T_x M$, and $\alpha$ is a scalar in the considered field, the sum in $T_x M \times \lbrace x \rbrace$ is defined by $(a,x) + (b,x) := (a+b,x)$ and $\alpha (a,x) := (\alpha a, x)$. This is clearly a vector space over the same field as before. Consequently, the tangent bundle is a structured space. Actually, something similar holds in a more general setting: if we consider any vector bundle $(E, X, \pi)$, by definition all its fibers $\pi^{-1}(\lbrace x \rbrace)$ must be vector spaces. Furthermore, since $\pi : E \rightarrow X$ is a (continuous) surjection, we can write $E$ as the union of all its fibers. It is then clear that the total space $E$ is a structured space (notice that $TM$ is the total space of the tangent bundle).
\end{example}
In the next Section we start to work towards a rigorous definition of structured space (and hence also of algebraic structure, by the observation above). In particular, we will define a map which assigns to each fixed neighborhood $U_p$ its local structure. In order to do this, we need to encode three kinds of description of an algebraic structure: the operations defined on $U_p$, the properties satisfied by the structure (e.g. commutativity, ...), and the additional non-algebraic structures (for instance, for Lie groups we also need to describe somehow the fact that it is also a smooth manifold with smooth operations, ...).
\section{The structure map}
We will divide into three subsections the descriptions we referred to at the end of Section 1. In the fourth subsection we will unify them and finally give, via the structure map, a precise definition of algebraic structure and structured space. This will be one of the main result of this paper.\\
Before starting, we notice that the idea of the first four subsection is to assign to each $U_p$ the algebraic structure we want to define on it. However, there is also another possibility: we could assign to each $p$ the algebraic structure defined on the corresponding (chosen) neighborhood $U_p$. This is useful in particular when $U_p = U_q$ for some $p \neq q$ (which can happen quite often). We will analyse this in Section 2.5, as a consequence of the preceding subsections.
\subsection{Operations}
We start by describing one of the fundamental aspects of an algebraic structure: the operation/s defined on it. The idea is to define a map $g_1$ from every fixed neighborhood to the corresponding operations defined on it:
$$U_p \xmapsto{g_1} \, \text{operations defined on} \, U_p$$
We can thus think to write a tuple containing all the operations defined on $U_p$. For instance, if $R$ is a ring with the operations given by '$+$' and '$\cdot$', then we can assign to $R$ the couple $(+,\cdot)$. Furthermore, we need $(+, \cdot)$ to be the same as $(\cdot, +)$, since we do not want to define an 'ordered pair' of operations. To do this, we actually just need to (arbitrarly) choose one of the equivalent tuples of operations, and use only that one (it does not matter which tuple we choose among all these equivalent tuples). Thus, consider the collection:
$$ \mathcal{T}_1:=\lbrace \, \text{tuple of operations defined on} \, U_p, \, \text{with} \, U_p \, \text{fixed local structure of} \, X \rbrace / \sim_1 $$
where $\sim_1$ formalises the previous cutting process: $(+,\cdot) \sim_1 (\cdot, +)$ (which is easily generalised to any tuple of operations). It is clear that $\sim_1$ is an equivalence relation, since it is reflexive (for instance, $(+,\cdot) \sim_1 (+,\cdot)$), it is symmetric (for instance, $(+,\cdot) \sim_1 (\cdot, +)$ $\Rightarrow$ $(\cdot, +) \sim_1 (+, \cdot)$) and it is transitive (for instance, $(+,\cdot) \sim_1 (\cdot, +)$ and $(\cdot, +) \sim_1 (+, \cdot)$ imply $(+,\cdot) \sim_1 (+,\cdot)$; this is better seen with more than two operations); hence, the quotient above is w.r.t. (with respect to) the equivalence relation $\sim_1$.\\
The collection $\mathcal{T}_1$ describes all the operations on every local structure in $X$. We define
$$g_1: \mathcal{U} \rightarrow \mathcal{T}_1$$
by
\begin{equation}\label{Eq:2.1}
U_j \xmapsto{g_1} ( \cdot_1, ...)
\end{equation}
where
$$ \mathcal{U}:= \lbrace \, \text{fixed neighborhoods} \, U_p \text{'s} \, \text{of} \, X \rbrace $$
It is clear, by the definition of $\mathcal{T}_1$, that $g_1$ is surjective.\\
This map $g_1$ precisely describes the operations defined on every local structure $U_p$. Hence, if we want to precisely specify what kind of operations on $U_p$ we want to deal with (i.e. we want to fix the operations we will consider on $U_p$), we can use this map $g_1$. This is the first key passage in the definition of the structure map (Section 2.4).\\
\begin{example}\label{Ex:2.1}
\normalfont Consider a vector space $V$ over a field $F$. If we want to describe the operations on this structured space (which is actually an algebraic structure), we assign to $V$ the couple of operations $(+, \cdot_F)$, where $+$ is the sum, and $\cdot_F$ is the multiplication for a scalar in the field $F$:
$$ V \xmapsto{g_1} (+, \cdot_F) $$
\end{example}
\subsection{Properties}
We now proceed in our rigorous description of the local structures considering the properties they satisfy. The idea is always to assign something to the local structure that represents the properties considered. This will be accomplished by using the 'encoding function' of some property.\\
We note that here we shall be more careful: the reason is explained with the following example. Every group is a magma, in the same way as any group is also a semigroup. So, if $U_p$ is a magma, but it also turns out that it is a group, what should we assign to $U_p$? The answer is actually simple: if we want to consider $U_p$ to be just a magma, we will assign it only the property of totality (which is equivalent to closure), while if we want to consider it as a group we will assign it the properties satisfied by a group. We can justify this noting that, if for instance we have a result which holds for magmas, and if $G$ is a group, then we consider $G$ to be just a magma and we appy such result to $G$. Thus, the map that will assign to $U_p$ the properties \textit{we want it to have} will also tell us what kind of structure we want to consider on it (in our previous example, if we want to consider $G$ only as a magma we assign it only the property of closure; if we want to consider it as a group, we assign it the properties of a group). Note that, this way, to $U_p$ we assign only one element encoding the properties we want to consider on it, and not all the elements which encode 'sub-properties', ... Therefore, this will give a rigorous and unambiguous description of the fixed local structure.\\
After this important observation, we start to define the encoding functions of the properties of the considered local structures. We do this by giving some examples for some common properties.\\
Consider \emph{commutativity} with respect to a certain operation, say $\cdot_k$. This property assures that:
$$ x \cdot_k y = y \cdot_k x$$
Define the right $\cdot_k$ operator by:
$$r_{\cdot_k}[x](y):= y \cdot_k x$$
The left $\cdot_k$ operator $l_{\cdot_k}[x]$ is defined analogously:
$$l_{\cdot_k}[x](y):= x \cdot_k y$$
Then, commutativity says that:
$$f(x,y):=l_{\cdot_k}[x](y)-r_{\cdot_k}[x](y) \equiv 0$$
over the structure $A$ on which the operation is defined (i.e. for every $x,y \in A$). This function $f$ encodes the property of commutativity over $A$. It is clear that, if we consider $f$ over another set $B$ with the same operations but that does not satisfy commutativity w.r.t. $\cdot_k$, then $f(x,y)\not \equiv 0$. This function $f$ is thus called the 'encoding function' of the property of commutativity w.r.t. $\cdot_k$. It is important to notice that there is no reason to assume that the operation '$-$' is a priori defined in every case. In fact, we define it formally in such a way that $a-b=0$ $\Leftrightarrow$ $a=b$. For the other cases (i.e. $a \neq b$), the operation '$-$' can be defined formally in any way (it is not important how we define it, since we are only interested in the case $a=b$). Henceforth, we always consider, when dealing with the encoding function, that '$-$' is defined formally in such a way that $a-b=0$ $\Leftrightarrow$ $a=b$. More precisely, we will always assume the following formal definition: if $a,b \in A$, we define '$-$' by just saying that the result of $a-b$ is (formally) $a-b$ (without actually assigning it a specific value) whenever $a \neq b$, while it is $0$ whenever $a=b$. This is sufficient for our aims here.\\
Before giving other examples and the precise definition, we note that we will consider the encoding function as a map that encodes \emph{only one property}. This will avoid ambiguity. So, if for instance we want to encode commutativity w.r.t. $\cdot_k$ and also w.r.t. $\cdot_j$, we will do this \textit{separately} by considering the two encoding functions of each property.\\
Another often used property is the existence of the \emph{identity element}. Following what we did for commutativity, consider:
$$ _r f_{id}[y](x):=x \cdot_k y - x $$
and
$$ _l f_{id}[y](x):=y \cdot_k x - x $$
Consider then the vector valued map
$$ f_{id}[y](x):=(_l f_{id}[y](x), _r f_{id}[y](x)) $$
Then, if $\exists \widehat{y} \in A$ such that $_r f_{id}[\widehat{y}](x) \equiv 0$ $\forall x \in A$, the structure $A$ has a right identity element. A similar reasoning holds for the left identity element. If a structure has both a right and left identity element, then $f_{id}[\widehat{y}](x) \equiv 0$ over $A$. Hence, these three maps are the encoding functions of the right/left identity element, and of the identity element, respectively.\\
Another commonly used property is \emph{associativity} w.r.t. one operation, say $\cdot_k$. To encode this, first define:
$$ r(x,y,z):= (x \cdot_k y) \cdot_k z$$
and
$$ l(x,y,z):= x \cdot_k (y \cdot_k z)$$
Now consider:
$$f(x,y,z):=r(x,y,z)-l(x,y,z)$$
Then, if $f(x,y,z) \equiv 0$ $\forall x,y,z \in A$, the structure $A$ is associative w.r.t. $\cdot_k$. Thus, this $f$ is the encoding function of associativity w.r.t. $\cdot_k$.\\
Our last example is related to closure (which turns out to be equivalent to totality in some cases) w.r.t. some operation, say $\cdot_k$. Suppose, for instance, that $\cdot$ is a binary operation on $A$. Then $\cdot: S \subseteq A \times A \rightarrow A$, because actually this can be in general a partial function, i.e. its domain is contained in $A \times A$, and does not necessarily coincide with it. It is clear that totality (i.e. $S \equiv A \times A$) is equivalent to closure here, so we just need to encode this fact. This can be done by considering the map $f_c$:
$$ \cdot \xmapsto{f_c} A \times A \setminus S $$
It is clear that, if $f_c(\cdot) \equiv \emptyset$, which is "the zero in the operations of union and intersection of sets", then we have closure. Thus, this function encodes closure (and totality).\\
We finally give a definition of 'encoding function':
\begin{definition}\label{Def:2.1}
Consider an algebraic structure $A$. An encoding function of a certain property is a function $f$ whose domain depends on $A$ (meaning that it could be $A^2$, ... or in general is related somehow to $A$) such that this function is precisely the zero function (w.r.t. $A$) if and only if the structure $A$ satisfies the property encoded by $f$.
\end{definition}
\begin{remark}\label{Rm:2.1}
\normalfont It is possible that several encoding functions exist for the same property. For instance, consider the example above related to commutativity. Instead of the encoding function
$$f(x,y):=l_{\cdot_k}[x](y)-r_{\cdot_k}[x]$$
we could consider the encoding function (we suppose that in the following example it is defined a multiplication by the scalar $-1$):
$$g(x,y):=-f(x,y)=r_{\cdot_k}[x](y)-l_{\cdot_k}[x](y)$$
This is not however a problem. As we did in Section 2.1, we define an equivalence relation $\sim$ which says that: $f \sim g$ iff they encode the same property. This is easily verified to be, indeed, an equivalence relation.\\
Also note that the encoding function itself does not depend on a particular structure: we assign it a domain which is some kind of algebraic structure, and if the encoding function turns out to be $\equiv 0$ over such structure then the structure satisfies that property, otherwise it does not satisfy it.
\end{remark}
We can now define a map $g_2$ which assigns to each $U_j$ the properties we want to consider on it (i.e. some properties that it satisfies and which we want to consider on it; see the discussion at the beginning of this subsection). This is done in the same way as for the operations: consider a tuple of (equivalence classes of) encoding functions, say $(f_1, ..., f_n)$. If $U_p$ satisfies these properties, and we want to consider on it only these ones, then we assign $U_j$ the tuple  $(f_1, ..., f_n)$. More precisely, we assign $U_j$ an equivalence class, since we consider again two tuples containing the same (equivalence classes of) encoding functions as equivalent. Call this equivalence relation $\sim_2$. Then the map
$$g_2 : \mathcal{U} \rightarrow \mathcal{T}_2$$
defined by:
$$ U_p \xmapsto{g_2} (f_1, ...) $$
precisely describes the properties we want to consider on $U_p$. Here $\mathcal{T}_2$ is the collection of all the (equivalence classes of) tuples of properties that are associated to at least one of the fixed $U_j$'s in $X$. This implies that $g_2$ is surjective.\\
\subsection{Non-algebraic structures}
Our last description involves non-algebraic structures (e.g. norms, metrics, manifolds, ... defined over the algebraic structure; of course, the algebraic structure will be still called this way: if we want, we can also say that it is a "mixed" algebraic structure, meaning that we have defined additional structures such as the ones above). It should be clear by now how we will proceed: we assign to each $U_j$ a certain element that completely describes the non-algebraic additional structure.\\
Consider, for instance, a metric defined over a certain algebraic structure $A$. We just need to assign to $A$ the metric function, say $d$. Another example is given by Lie groups: here we need to assign a topology to a group $G$ (this can also be different from the topology on $X$: as we have already said, we either drop the structure on $X$ temporary (and here we will do this) or we consider $G$ as a bitopological space, and then apply all the considerations for the Lie group to the topology we want to deal with), an atlas describing the manifold (and we also need to require the transition maps to be $C^{\infty}$) and a function describing the smoothness of the operations on the group. The first passage is simple: just let $\tau$ be any topology on $G$ such that $G$ is Hausdorff and second countable. Then consider an atlas $\lbrace (V_{\alpha}, \phi_{\alpha}) \rbrace$ containing charts such that (by definition of atlas) the $V_{\alpha}$'s cover $G$ and $\phi_{\alpha}$ are homeomorphisms of $V_{\alpha}$ onto an open subset of $\mathbb{R}^n$ (this can be easily generalised to complex manifolds, ...). We also require this atlas to be such that the transition maps
$$ \phi_{\beta} \circ \phi_{\alpha}^{-1}|_{\phi_{\alpha}(V_{\alpha} \cap V_{\beta})} : \phi_{\alpha}(V_{\alpha} \cap V_{\beta}) \rightarrow \phi_{\beta}(V_{\alpha} \cap V_{\beta})$$
are $C^{\infty}$. Then, as shown in Proposition 5.10 and the comment below it in [3], $G$ is a smooth manifold. In order to be a Lie group, we also need the map (here $\cdot$ is the operation defined on $G$)
$$ (x,y) \xmapsto{f_L} x \cdot y^{-1} $$
to be smooth from the product manifold $G \times G$ into $G$, so suppose it is smooth. We then assign to $G$ via $g_3$ the collection $\lbrace \tau, \lbrace (V_{\alpha}, \phi_{\alpha}) \rbrace , f_L \rbrace$, which completely describes the non-algebraic structure defined over the group (note that we assign $f_L$ only to specify that the operation defined by this function is smooth; we would not assign this map to $G$ if it were not smooth (or, more generally, if it did not satisfy a certain property that describes the additional non-algebraic structure), because it would not be useful).\\
If we do not add any non-algebraic structure to $A$, we assign it $\lbrace \emptyset \rbrace$.\\
By this examples and the previous subsections, it is then clear that we can define
$$g_3 : \mathcal{U} \rightarrow \mathcal{T}_3$$
via
$$ U_p \xmapsto{g_3} \lbrace ... \rbrace$$
where $\lbrace ... \rbrace$ is a collection encoding the non-algebraic additional structures on $U_p$, and $\mathcal{T}_3$ contains all the (equivalence classes of) such collections (the ones such that at least one of the fixed $U_p$'s is mapped to them).\\
\subsection{The structure map}
We can finally define the structure map of a structured space, which will also allow us to define in an unambiguous way what a structured space (and hence also an algebraic structure) is. The idea is to "sum up" the actions of $g_1$, $g_2$ and $g_3$. To do this, we assign to each $U_p$ the collection $\lbrace g_1(U_p), g_2(U_p), g_3(U_p) \rbrace$, that is $\lbrace (\cdot_1, ...), (f_1, ...), \lbrace ... \rbrace \rbrace$. We denote by $\mathcal{T}$ the family of all these collections (the ones such that at least one of the fixed $U_j$'s is mapped to them). Thus, we have the following:
\begin{definition}\label{Def:2.2}
The structure map of a structured space $X$ precisely describes all its local structures, i.e. all the fixed $U_p$'s with their fixed algebraic structures. The structure map
$$f_s:\mathcal{U} \rightarrow \mathcal{T}$$
is defined as follows:
\begin{equation}\label{Eq:2.2}
U_p \xmapsto{f_s} \lbrace g_1(U_p), g_2(U_p), g_3(U_p) \rbrace
\end{equation}
\end{definition}
We note that, by definition of $\mathcal{T}$, $f_s$ is surjective. This map constitutes a fundamental result in the theory of structured spaces, because it precisely describes their local structures. In fact, we can define (this time, rigorously and unambiguously) what a structured space is as follows:
\begin{definition}[Structured space; final version]\label{Def:2.3}
A structured space is a couple $(X,f_s)$, where $X$ is a topological space and $f_s$ is the structure map that assigns to each subset of $X$ contained in the domain $\mathcal{U}$ its local structure.
\end{definition}
Notice that $f_s$ specifies both the fixed neighborhoods $U_p$'s (because its domain contains them) and also their fixed algebraic structures.\\
As a particular case, we can finally define precisely what an algebraic structure is:
\begin{definition}[Definition of algebraic structure]\label{Def:2.4}
An algebraic structure is a couple $(X,f_s)$, where $X$ is a set and $f_s$ is the structure map that assigns to $X$ the operations, the properties and the additional non-algebraic structures that we want to consider on it.
\end{definition}
We note that in Definition \ref{Def:2.4} we have not required $X$ to be a topological space, since that assumption is used to define the neighborhoods of a structured space, and here this is clearly not necessary. This way, we have a general definition of algebraic structure (without considering a topology on it). We also notice that, via the structure map, we can precisely describe also algebraic structures that are different from the "usual" ones, i.e. the ones considered in many contexts, such as rings, groups, C$^{*}$ algebras, ... For an example of a different algebraic structure, see for instance the structure map $f_{1,2}$ in Proposition \ref{Prop:3.1}.\\
We now give an example of construction of a structured space starting from a collection $\mathcal{U}$ of sets and a structure map $f_s : \mathcal{U} \rightarrow \mathcal{T}$. This construction can be performed in general, as will be clear by the example, so we can state the following:
\begin{theorem}[Construction of structured spaces]\label{Thm:2.1}
Consider a (possibly uncountable) collection of sets $\mathcal{U}:=\lbrace A_p \rbrace$ (each of which with more than one element). Define a priori (if possible) a structure map $f_s : \mathcal{U} \rightarrow \mathcal{T}$ that assigns to each element of $\mathcal{U}$ the structure we want to consider on it. Then, the space
$$X:= \bigcup_p A_p$$
endowed with the topology $\tau$ constructed (after a simple generalisation) in Example \ref{Ex:1.1} is a structured space.
\end{theorem}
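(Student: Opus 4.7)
The plan is to generalise the topology construction from Example \ref{Ex:1.1} to the (possibly uncountable) family $\mathcal{U}$, then verify that the two requirements of Definition \ref{Def:2.3} are satisfied: first that $X$ admits a topology in which each $A_p \in \mathcal{U}$ is an open neighborhood of each of its points, and second that the given $f_s$ plays the role of the structure map. Since $f_s$ is already supplied and its domain $\mathcal{U}$ already consists of subsets of $X$, the only real work is topological.

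First I would construct $\tau$ as the smallest topology on $X$ containing $\mathcal{U} \cup \{X,\emptyset\}$. Concretely, following Example \ref{Ex:1.1}, set
\[
\mathcal{B} := \Bigl\{\,\bigcap_{j=1}^{k} A_{p_j} : A_{p_j} \in \mathcal{U}\cup\{X\},\ k \in \mathbb{N}\setminus\{0\}\,\Bigr\},
\]
and let $\tau$ consist of $\emptyset$ together with all (possibly infinite) unions of elements of $\mathcal{B}$. I would then check the three axioms: $\emptyset,X \in \tau$ by construction (taking $k=1$ with $A_{p_1}=X$ for $X$); arbitrary unions of elements of $\tau$ are unions of unions of elements of $\mathcal{B}$, hence in $\tau$; and finite intersections of elements of $\tau$ reduce, by distributing intersection over union, to unions of finite intersections of elements of $\mathcal{B}$, which (by associativity of intersection) are again elements of $\mathcal{B}$, and hence give an element of $\tau$. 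This is exactly the reasoning used at the end of Example \ref{Ex:1.1}, and nothing in the argument required $\mathcal{U}$ to be countable.

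Next I would verify the structured space property in the sense of Definition \ref{Def:1.2}. Fix any $p \in X$. Since $X = \bigcup_{q} A_q$, there exists some $q$ with $p \in A_q$. By construction $A_q \in \tau$, so $A_q$ is itself an open set containing $p$; in particular $A_q$ is a neighborhood of $p$ in the sense of Definition \ref{Def:1.1}. By hypothesis $A_q$ has more than one element, so $A_q \varsupsetneq \{p\}$, and the map $f_s$ already assigns to $A_q$ the algebraic structure we want to consider on it. Fixing this choice of neighborhood and structure at every $p$ (any choice of $q$ with $p \in A_q$ will do) shows that $X$ is a structured space in the preliminary sense, and since the domain of $f_s$ is precisely the collection $\mathcal{U}$ of these fixed neighborhoods, the pair $(X,f_s)$ is a structured space in the sense of Definition \ref{Def:2.3}.

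I expect no serious obstacle; the only delicate point is bookkeeping in the topology construction when $\mathcal{U}$ is uncountable, because one must be careful that the generating family $\mathcal{B}$ is still closed under finite intersection (which it is, since a finite intersection of finite intersections is a finite intersection) and that distributivity of intersection over arbitrary union applies in full generality (which it does set-theoretically). Once these are noted, the rest is essentially the same argument already carried out in Example \ref{Ex:1.1} and Proposition \ref{Prop:1.1}, now recast through the structure map formalism of Section 2.
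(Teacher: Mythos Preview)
Your proposal is correct and follows exactly the approach the paper intends: the paper does not give a separate proof of Theorem~\ref{Thm:2.1} but instead illustrates it via Example~\ref{Ex:2.2} and states that the general case follows by the same construction as in Example~\ref{Ex:1.1} and Proposition~\ref{Prop:1.1}. Your write-up simply makes explicit the topology verification and the neighborhood check that the paper leaves to the reader, so there is nothing to correct.
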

\begin{example}\label{Ex:2.2}
\normalfont Consider four sets $U_i$ with more than one element each, and let $f_s$ be an a priori structure map defined as follows:\\
$$ U_1 \xmapsto{f_s} \lbrace (+_1, \cdot_1, \cdot_2), (g_1, g_2, g_3), \lbrace \emptyset \rbrace \rbrace$$
$$ U_2 \xmapsto{f_s} \lbrace (\cdot_3), (f_1, f_3), \lbrace \emptyset \rbrace \rbrace$$
$$ U_3 \xmapsto{f_s} \lbrace (+_2), (f_1, f_2, f_3, f_4, f_5), \lbrace \emptyset \rbrace \rbrace$$
$$ U_4 \xmapsto{f_s} \lbrace (\cdot_4), (f_1, f_2, f_3, f_4), \lbrace \tau, \lbrace (V_{\alpha}, \phi_{\alpha}) \rbrace , f_L \rbrace \rbrace$$
where:\\
$\bullet$ the algebraic structure assigned to $U_1$ has three operations and satisfies only the properties of closure w.r.t. its three operations (the $g_i$'s encode the closure w.r.t. these operations, one for each $g_i$). We do not add any additional non-algebraic structure;\\
$\bullet$ $f_1$, ..., $f_5$ are the encoding functions w.r.t. the unique operation defined on each $U_i$ ($i \neq 1$) of the following properties, respectively: closure, associativity, identity, invertibility, commutativity. As noticed also in Remark \ref{Rm:2.1}, the encoding functions themselves do not depend on a fixed structure, but when we consider them we first assign them the structure as domain, and then verify if it is $\equiv 0$ over it. If it is, then the property is satisfied; otherwise, it is not, and so we do not associate that encoding function to the structure. Here, with a little abuse of notation, we have indicated all the properties with respect to (generally) different operations in the same way, even though they are different; however, this is not confusing here. Moreover, $\tau$ is a topology on $U_4$ such that $U_4$ is Hausdorff and second countable, $\lbrace (V_{\alpha}, \phi_{\alpha}) \rbrace$ is an atlas whose transition maps are $C^{\infty}$, and $f_L$ is the smooth map $(x,y) \xmapsto{f_L} x \cdot_4 y^{-1}$;\\
$\bullet$ following what written above, we can say that $U_2$ is a unital magma, $U_3$ is an abelian ("additive") group, and $U_4$ is a Lie group.\\
The space
$$X:= \bigcup_{i=1}^{4} U_i$$
endowed with the topology $\tau$ constructed as in Example \ref{Ex:1.1} is, by Proposition \ref{Prop:1.1}, a structured space (more precisely, $(X,f_s)$ is a structured space, and $f_s$ is indeed a structure map).\\
This can be clearly generalised, obtaining Theorem \ref{Thm:2.1}.
\end{example}
\subsection{A modified structure map}
As said at the beginning of this section, we also have the possiblity to assign to each $p \in X$ the corresponding algebraic structure of the fixed $U_p$ (this fact has been pointed out by Helge Gl\"ockner in a personal communication). This can be useful in particular when $U_p = U_q$ for $p \neq q$. We can thus construct a "modified" structure map which also suites this need. We define the modified structure map $\widehat{f}_s$ as follows:
\begin{equation}\label{Eq:2.3}
p \xmapsto{\widehat{f}_s} f_s(U_p)
\end{equation}
Hence, this modified structure map represents another way to describe precisely a structured space: indeed, if instead of considering all the fixed structures we want to take the point of view of $X$ itself, we can describe the local structures via
$$\widehat{f}_s : X \rightarrow \mathcal{T}$$
defined above.\\
Actually, we notice that $f_s$ and $\widehat{f}_s$ are "equivalent", in the sense that we can use any of them in the definition of structured space. Indeed, if we know $f_s$ we can obviously obtain $\widehat{f}_s$ (by the definition above), while it is also clear that if we have $\widehat{f}_s$ we can obtain $f_s$, because to each $p\in X$ we assign a fixed $U_p$. Thus, using $f_s$ or $\widehat{f}_s$ in the definition of structured space (and algebraic structure) does not make any difference.\\
We also define a fundamental equivalence relation, '$\equiv$', which gives the usual meaning of 'equivalent algebraic structure' to $f_s$ and to $\widehat{f}_s$. We say that $f_s(A) \equiv f_s(B)$ (and then it clearly follows a definition of '$\equiv$' for $\widehat{f}_s$) if the following three conditions are satisfied:\\
(i) they have the same number of operations (which can also be different)\\
(ii) they have the same additional non-algebraic structure\\
(iii) there is a bijection between the operations given by $g_1(A)$ and the ones given by $g_1(B)$ such that, whenever $+_t$ is an operation defined on $A$ which satisfies some properties (given by encoding functions in $g_2(A)$), then the corresponding (via the said bijection) operation $\cdot_r$ on $B$ satisfies precisely the same properties (for instance, if $+_t$ is commutative, then $\cdot_t$ is commutative, ...), which are given by encoding functions in $g_2(B)$.\\
(i) should be clear since, for instance, we are used to groups (consider any such group, say $A$) under $+$, or under some other operation (consider any such group, say $B$) $\cdot$, ... They are all groups (even though under different operations), so they have the same algebraic structure (i.e. their algebraic structures are equivalent: $f_s(A) \equiv f_s(B)$); the fact that they have different operations follows from: $f_s(A) \neq f_s(B)$. However, since we will not need to use $=$ in the usual sense, we will occasionally use it while actually meaning $\equiv$ (we will however avoid this in most cases). The other points should also be clear, for similar reasons.\\
We notice that, if for instance $A$ is consider a group under $f_s$ and $B$ is considered a magma under this structure map (even though it could also be considered as a group), then their structures are different (i.e. $\not \equiv$), because (ii) above is not satisfied. Thus, we always need to pay attention to the fact that algebraic structures can also be considered as their substructures, but this will clearly modify the structure map $f_s$, which has various consequences on the equivalences '$\equiv$', as in the example above.
\section{Some ways to construct structured spaces}
In this section we construct some structured spaces starting from other structured spaces; for example, we consider products of structured spaces, isomorphic/homomorphic/... spaces, ... We will show that these kinds of spaces are structured spaces in various propositions, and then we will summarise everything in a Theorem at the end of this Section. We will also consider a classification of structured spaces starting from the classifications of the fixed local structures contained in it.\\
We start with product spaces:
\begin{proposition}[Product space]\label{Prop:3.1}
Let $(X,f_1)$, $(Y,f_2)$ be two structured spaces. Then $(X \times Y, f_{1,2})$ (endowed with the product topology) is a structured space.
\end{proposition}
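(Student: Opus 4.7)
The plan is to construct the fixed neighborhoods and structure map on $X \times Y$ directly from the data of $(X,f_1)$ and $(Y,f_2)$. Given any $(p,q) \in X \times Y$, let $U_p$ be the fixed neighborhood of $p$ in $X$ and $V_q$ the fixed neighborhood of $q$ in $Y$, each coming with its assigned algebraic structure via $f_1$ and $f_2$ respectively. I would then declare the fixed neighborhood of $(p,q)$ in $X \times Y$ to be $U_p \times V_q$.

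First I would verify the two topological/cardinality requirements from Definition \ref{Def:1.2} and the convention stated immediately after it. Since $U_p$ contains an open set $O_p \ni p$ and $V_q$ contains an open set $O_q \ni q$, the product $O_p \times O_q$ is a basic open set of the product topology contained in $U_p \times V_q$ and containing $(p,q)$, so $U_p \times V_q$ is indeed a neighborhood of $(p,q)$. Moreover, since $U_p$ and $V_q$ each contain more than one point, so does their product, avoiding the trivial case.

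Next I would specify $f_{1,2}$ on the domain $\mathcal{U}_{X \times Y} := \{\,U_p \times V_q : p \in X,\, q \in Y\,\}$ by transporting the data of $f_1(U_p)$ and $f_2(V_q)$ componentwise. Concretely, for each operation $\cdot_i$ on $U_p$ and each operation $*_j$ on $V_q$ appearing in $g_1(U_p)$ and $g_1(V_q)$ one obtains a componentwise operation on $U_p \times V_q$ (acting as $\cdot_i$ on the first slot and as $*_j$ on the second), giving the tuple for $g_1(U_p \times V_q)$. The encoding functions in $g_2(U_p \times V_q)$ are then obtained by noting that a property such as associativity, commutativity, existence of identity, or closure holds for the componentwise operation precisely when it holds in each factor, so one can take the union (modulo $\sim_2$) of the properties carried by $U_p$ and $V_q$. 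Finally, $g_3(U_p \times V_q)$ is defined by combining the additional non-algebraic structures (product topology on the factors, product atlas when both sides are manifolds, etc.) or setting it to $\{\emptyset\}$ if neither factor has extra structure. With this prescription, $f_{1,2}: \mathcal{U}_{X \times Y} \rightarrow \mathcal{T}_{1,2}$ is well-defined and surjective onto its codomain by construction.

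The only genuine subtlety, and the step I expect to require the most care, is making precise the combinatorial match-up of the operation tuples of $U_p$ and $V_q$ when the two factors carry different numbers of operations of different arities, and similarly the bookkeeping that shows each encoding function of a componentwise operation vanishes iff the corresponding encoding functions vanish on both factors. Once that is spelled out, the conclusion that $(X \times Y, f_{1,2})$ satisfies Definition \ref{Def:2.3} is immediate: every point $(p,q) \in X \times Y$ has the fixed neighborhood $U_p \times V_q$ in the domain of $f_{1,2}$ with an explicit assigned algebraic structure.
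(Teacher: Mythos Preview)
Your proposal is correct and follows essentially the same approach as the paper: fix the neighborhoods $U_p \times V_q$, define operations componentwise (yielding $nm$ operations from $n$ and $m$), and assemble $g_1$, $g_2$, $g_3$ of the product from those of the factors. The only cosmetic difference is that the paper packages the properties as explicit pairs $(h_r, z_d)$ (one property on each component, treated together as a single new encoding function) rather than as a union of the individual properties, and takes the formal product $g_3(U_p)\times g_3(V_t)$ for the non-algebraic data; your more careful verification that $U_p\times V_q$ is actually a neighborhood in the product topology is a detail the paper omits.
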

\begin{proof}
\normalfont Endow $X \times Y$ with the product topology. We then only need to show that every $(x,y) \in X \times Y$ has a neighborhood which is an algebraic structure. Consider the neighborhoods $U_p \times V_t$ for some $U_p \in \mathcal{U}$, $V_t \in \mathcal{V}$. We define an algebraic structure on $X \times Y$ component-wise as follows:
$$ (x_1,y_1) \cdot_{\times, (1,1)} (x_2,y_2):=(x_1 \cdot_{X, 1} y_1, x_2 \cdot_{X, 1} y_2) $$
and so on. Notice that $\cdot_{X, i}$ is the $i$-th (clearly, it does not matter how we order the operations) operation on $X$, and $\cdot_{Y, i}$ is the $i$-th operation on $Y$. Then, the space $X \times Y$ will have (if $X$ has $n$ (different) operations and $Y$ has $m$ (different) operations) $nm$ operations. Hence, the function $g_1$ in the structure map is determined this way.\\
Now suppose that the properties of some $U_p$ are $(h_1, ..., h_j)$ and of $V_t$ are $(z_1, ..., z_i)$. Then we define the following properties on $U_p \times V_t$: the property $(h_1,z_1)$, which are just the properties $h_1$, $z_1$ w.r.t. the first component of the couple, the second component, respectively; ...; the property, in general, $(h_r,z_d)$, which means that $U_p \times V_t$ satisfies the property $h_r$ on the first component and $z_d$ on the second component. The fact that we consider two properties is not against what we said in Section 2, because here we are considering them in a couple in a product space (so the property is a couple, and not the two separated properties). This way, we obtain all these properties from the ones in each factor space, and hence also $g_2$ of the structure map is determined.\\
It only remains to consider the additional non-algebraic structures. In this case, we can just consider the (formal) product of collections $g_3(U_p) \times g_3(V_t)$ for every $U_p \in \mathcal{U}$, and for every $V_t \in \mathcal{V}$. This means that the new structure $U_p \times V_t$ will have a product mixed structure; for instance, if $U_p$ is a Lie group and $V_t$ is a Banach space, then $U_p \times V_p$ is a new mixed structure that reduces, under a canonical projection, to a Lie group or a Banach space, depending on the projection considered ($\pi_1$ if w.r.t. the first component, ...). It is sometimes possible that this product space is a "known" structure (e.g. the product of two groups is still a group), but (via the structure map) we can also define these product mixed structures in any situation, instead of only particular ones.\\
The product structure map on $X \times Y$ constructed this way will be called $f_{1,2}$. It is then clear that $(X,f_{1,2})$ is a structured space.
\end{proof}
We proceed with isomorphisms/homomorphisms/... (here, with homomorphisms we mean a map between two sets endowed with the same algebraic structures such that the operations are preserved (for instance, if $f$ is a homomorphism, then $f(x \times y)=f(x) \times f(y)$ for the operation '$\times$' defined on both structures)). The definition of isomorphism between structured spaces is defined locally, as we could expect. Suppose that $\mathcal{U}$ and $\mathcal{V}$ are the collections of fixed local structures of the structured spaces $X$, $Y$, respectively. Assume also that there exists at least one map $h: \mathcal{U} \rightarrow \mathcal{V}$ that assigns to each element in $\mathcal{U}$ an element with the same algebraic structure in $\mathcal{V}$. After some reordering, we can assume that $U_p$ is mapped to $V_p$ by $h$. Then, if $U_p$ and $V_p$ are isomorphic (where the isomorphism is defined accordingly to the (same) algebraic structure of these two neighborhoods) for every $U_p$, $h(U_p)=V_p$, then we say that $X$ and $Y$ are isomorphic as structured spaces. If instead of isomorphisms we had homomorphisms/..., we would have homomorphic/... structured spaces.\\
We prove the following result:
\begin{proposition}\label{Prop:3.2}
Let $(X,f)$ be a structured space. Let $\mathcal{U}:=\lbrace U_p \rbrace$ be, as usual, the collection of all the fixed local structures of $X$. For each $U_p \in \mathcal{U}$, consider any set $V_p$ which is isomorphic/homomorphic/... to $U_p$ (where we consider on $V_p$ the same algebraic structure of the corresponding $U_p$). Let:
$$\widehat{X}:= \bigcup_p V_p$$
Then $(\widehat{X},f)$ is a structured space and it is isomorphic/homomorphic/... to $(X,f)$.
\end{proposition}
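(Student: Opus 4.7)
The plan is to verify two things in sequence: that $(\widehat{X}, f)$ can be given the structure of a structured space in the sense of Definition \ref{Def:2.3}, and that it is related to $(X,f)$ by an isomorphism (resp.\ homomorphism, \ldots) in the sense of the paragraph immediately preceding this Proposition. Both should follow almost directly from Theorem \ref{Thm:2.1} together with the hypothesis that each $V_p$ carries the same algebraic structure as the corresponding $U_p$.

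First I would endow $\widehat{X} = \bigcup_p V_p$ with the topology $\tau$ constructed, after the obvious generalisation, exactly as in Example \ref{Ex:1.1}, so that each $V_p$ is open in $\widehat{X}$ and therefore is a neighborhood of each of its points (with more than one element, since $U_p$ had more than one element and the correspondence is bijective in the isomorphic case and at least set-theoretically well-behaved in the weaker cases). The hypothesis says that $V_p$ already carries the same algebraic structure as $U_p$, so the three tuples $g_1(V_p)$, $g_2(V_p)$, $g_3(V_p)$ of operations, equivalence classes of encoding functions, and additional non-algebraic data agree with those of $U_p$ up to the equivalence $\equiv$ of Section 2.5. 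The structure map $f$ of $X$ therefore determines, tautologically, a structure map on the new collection $\widehat{\mathcal{U}} := \{V_p\}$, which by mild abuse of notation we also call $f$. Theorem \ref{Thm:2.1} applied to $\widehat{\mathcal{U}}$ and this transported $f$ then gives that $(\widehat{X}, f)$ is a structured space.

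Next I would exhibit the required morphism of structured spaces by defining $h : \mathcal{U} \to \widehat{\mathcal{U}}$ by $h(U_p) := V_p$. By construction $U_p$ and $h(U_p) = V_p$ share the same algebraic structure, and by hypothesis each individual pair $U_p$, $V_p$ is linked by an isomorphism (respectively homomorphism, \ldots) in the appropriate algebraic sense. This is precisely the criterion given just before the Proposition for $X$ and $\widehat{X}$ to be isomorphic (respectively homomorphic, \ldots) as structured spaces, so the second conclusion follows as well.

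The main obstacle, as far as I can see, is essentially notational rather than substantial: writing $(\widehat{X}, f)$ uses the same symbol $f$ for a map whose original domain was $\mathcal{U}$. To make this rigorous one fixes, for each $p$, a chosen isomorphism $\phi_p : U_p \to V_p$ (in the isomorphic case) and defines the operations on $V_p$ as the push-forwards of those on $U_p$ along $\phi_p$, for instance $v_1 \cdot^{V_p}_k v_2 := \phi_p\bigl(\phi_p^{-1}(v_1) \cdot^{U_p}_k \phi_p^{-1}(v_2)\bigr)$, and similarly for any non-algebraic data (topology pushed forward via $\phi_p$, atlas obtained by postcomposing charts with $\phi_p^{-1}$, smoothness of $f_L$ inherited by conjugation, etc.). Once this transport is fixed, pullback along $\phi_p$ automatically sends every encoding function on $V_p$ to the corresponding one on $U_p$, so that $f_s(V_p) \equiv f_s(U_p)$ componentwise in $g_1$, $g_2$, $g_3$, and every verification reduces to a one-line pullback computation. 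In the purely homomorphic (non-isomorphic) case the argument is the same, with the obvious caveat that the morphism $h$ then yields only a homomorphism of structured spaces, consistent with the statement of the Proposition.
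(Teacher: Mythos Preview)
Your proposal is correct and follows essentially the same route as the paper's proof: the paper simply notes that each $V_p$ is an algebraic structure with the same structure as $U_p$, invokes Proposition~\ref{Prop:1.1} (you invoke the equivalent Theorem~\ref{Thm:2.1}) to conclude that $(\widehat{X},f)$ is a structured space, and then appeals directly to the definition of isomorphism/homomorphism of structured spaces given just before the statement. Your version is more careful in spelling out the transport of structure along $\phi_p$ and the notational abuse in reusing $f$, points the paper leaves implicit, but the underlying argument is the same.
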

\begin{proof}
Since every $V_p$ is still an algebraic structure, and actually it is exactly the same structure of $U_p$, we conclude that $(\widehat{X},f)$ (we consider the same structure map of $X$, thanks to the relation given by isomorphism/homomorphism/...) is a structured space (see Proposition \ref{Prop:1.1}). Furthermore, by the definition above of isomorphism/homomorphism/... between structured spaces, it is clear that $X$ and $\widehat{X}$ are isomorphic/homomorphic/...
\end{proof}
We now proceed considering quotient structured spaces. In order to do this, we need to define what we shall call 'structured ideal spaces'. Consider (we suppose that this can be done for every element of $\mathcal{U}$), for every fixed $U_p$ in a structured space $X$, a certain structure, say $\widehat{U}_p$, such that we can define the quotient space $U_p / \widehat{U}_p$ (for instance, if $U_p$ is a group, we can consider any normal subgroup of $U_p$, while if $U_p$ is a ring we consider an ideal $I$ (from this we take the name of these spaces)). Then, the space:
$$\widehat{X}:= \bigcup \widehat{U}_p$$
is called a structured ideal space of $X$. It is then clear that we can define the space:
\begin{equation}\label{Eq:3.1}
X/ \widehat{X}:= \bigcup U_p / \widehat{U}_p
\end{equation}
which is called the quotient space of $X$ by its ideal $\widehat{X}$. We note that the structures $\widehat{U}_p$ are such that the quotient spaces $U_p / \widehat{U}_p$ have still the same structure of $U_p$ (this remark is important in particular for "not well known" algebraic structures: while for groups, rings, ... we are used to consider quotient spaces, there are less known structures that need some definition for quotients, and this is done accordingly to this condition). We prove the following:
\begin{proposition}[Quotient space]\label{Prop:3.3}
Let $(X,f)$ be a structured space and let $\widehat{X}$ be a structured ideal of $X$. Define $X / \widehat{X}$ as in \eqref{Eq:3.1}. Then, defining a topology on $X / \widehat{X}$ by using Proposition \ref{Prop:1.1}, the space $(X / \widehat{X}, f)$ is a structured space.
\end{proposition}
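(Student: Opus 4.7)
My plan is to reduce the statement directly to Proposition \ref{Prop:1.1} by exhibiting $X/\widehat{X}$ as a union of algebraic structures together with a topology containing each of them. First, I would observe that by the very definition of a structured ideal space, for every fixed $U_p \in \mathcal{U}$ the quotient $U_p/\widehat{U}_p$ is well-defined and carries the same algebraic structure as $U_p$ (this is built into the definition just before the statement). Thus the collection $\mathcal{U}' := \{U_p/\widehat{U}_p\}_p$ is again a collection of algebraic structures of the same type, each with more than one element (I would briefly note that in the non-degenerate cases considered the quotients are non-trivial; the authors implicitly assume this by declaring $U_p/\widehat{U}_p$ a legitimate local structure).

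Next, I would set $\widetilde{X} := X/\widehat{X} = \bigcup_p U_p/\widehat{U}_p$ and apply the topology construction of Example \ref{Ex:1.1} (as generalized in Theorem \ref{Thm:2.1}): start from $\{ \widetilde{X}, \emptyset \} \cup \mathcal{U}'$, close under finite intersection, then close under arbitrary union. This produces the smallest topology $\tau'$ on $\widetilde{X}$ containing every $U_p/\widehat{U}_p$ as an open set. At this point the hypotheses of Proposition \ref{Prop:1.1} are satisfied verbatim: we have a union of algebraic structures indexed by $p$ and a topology in which each of these sets is open.

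Then I would define the structure map for the quotient. For each fixed neighborhood $U_p/\widehat{U}_p \in \mathcal{U}'$, set
\begin{equation*}
f(U_p/\widehat{U}_p) := f(U_p),
\end{equation*}
which is legitimate because by construction $U_p$ and $U_p/\widehat{U}_p$ have the same operations, satisfy the same (encoded) properties, and carry the same additional non-algebraic structure; in the language of Section 2.5 we have $f(U_p/\widehat{U}_p) \equiv f(U_p)$. Thus $g_1$, $g_2$, $g_3$ transfer unchanged from $U_p$ to $U_p/\widehat{U}_p$, and the surjectivity of $f$ onto its image $\mathcal{T}$ is preserved. Proposition \ref{Prop:1.1} now delivers that $(\widetilde{X},f)$ is a structured space, which is exactly the claim.

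The only potentially subtle point, and the one I would handle most carefully, is ensuring that the condition "$U_p/\widehat{U}_p$ has the same algebraic structure as $U_p$" actually holds in the general setting of the paper and not merely for the classical examples (groups mod normal subgroups, rings mod ideals). I would address this by appealing explicitly to the definitional constraint placed on structured ideals in the paragraph just above the proposition, so that the step is justified by assumption rather than requiring a case analysis; everything else is routine invocation of Proposition \ref{Prop:1.1}.
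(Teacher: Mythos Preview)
Your proposal is correct and follows essentially the same approach as the paper: you use the definitional requirement that each $U_p/\widehat{U}_p$ inherits the same algebraic structure as $U_p$, then invoke Proposition~\ref{Prop:1.1} with the topology of Example~\ref{Ex:1.1} and transfer the structure map via $f(U_p/\widehat{U}_p)\equiv f(U_p)$. The paper's proof is more terse, but the logical skeleton is identical; your additional care in spelling out the topology construction and the role of the definitional constraint on structured ideals only makes the argument more explicit, not different.
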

\begin{proof}
\normalfont Since, by the assumption discussed above, all the quotient spaces $U_p / \widehat{U}_p$ are defined in such a way that they still have the same structure $U_p$, we can use the same structure map $f$ that assigns to each $U_p / \widehat{U}_p$ its local structure, which we consider to be the same as the structure of $U_p$ thanks to the relation given by the quotients. It is then clear that, since we define a topology on the quotient space using Proposition \ref{Prop:1.1} (this can be done since all the fixed local structures of $X / \widehat{X}$ are algebraic structures, as said above), this space is a structured space: just consider the $U_p / \widehat{U}_p$'s as fixed neighborhoods.
\end{proof}
Before studying classifications of structured spaces, we show that also direct limits give rise to structured spaces. We first recall (see also [11-13]) what a direct limit is (we do not use the general definition involving category theory here, because we do not need it). Let $(I, \leq)$ be a directed set and consider a family of sets, say $\lbrace A_i \rbrace$, endowed with the same algebraic structure; moreover, consider homomorphisms $f_{i,j} :A_i \rightarrow A_j$ for $i \leq j$ such that:\\
$\bullet$ $f_{i,i}$ is the identity of $A_i$;\\
$\bullet$ $f_{i,k}=f_{j,k} \circ f_{i,j}$ for all $i \leq j \leq k$.\\
Then, for the direct system (over $I$) $\langle A_i, f_{i,j} \rangle$, we define (if it exists) the direct limit:
\begin{equation}\label{Eq:3.2}
\varinjlim A_i := \sqcup_i A_i / \sim
\end{equation}
where $\sqcup$ denotes the disjoint union, and $x_i \sim x_j$ iff there is some $k$ with $i \leq k$ and $j \leq k$ such that $f_{i,k}(x_i)=f_{j,k}(x_j)$. Furthermore, we define the canonical functions $\phi_i : A_i \rightarrow \varinjlim A_i$ as the maps that send each element to its equivalence class. We define algebraic operations on $\varinjlim A_i$ so that these $\phi_i$'s become homomorphisms. This way, we assume that $\varinjlim A_i$ has the same algebraic structure of all the $A_i$'s. We now prove:
\begin{proposition}[Direct limit]\label{Prop:3.4}
Consider collections $\mathcal{C}_i:=\lbrace _i U_p, p \in I_i \rbrace$, where $I_i$ (for fixed $i$) is an ordered set, and all the element of each collection are endowed with the same algebraic structure. Suppose that for each $i$ there exist the direct limits as defined by \eqref{Eq:3.2} (if there are several different direct limits for the same collection, choose one of them), and assume (using the homomorphisms $_i \phi_p$, as discussed above) that all these direct limits have the same algebraic structure of the corresponding direct system. Then the space
$$\varinjlim X := \bigcup_i \varinjlim \, _i U_p$$
is a structured space.
\end{proposition}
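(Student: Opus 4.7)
The plan is to reduce Proposition \ref{Prop:3.4} to a direct application of Proposition \ref{Prop:1.1} (equivalently Theorem \ref{Thm:2.1}). The key observation is that, by hypothesis, each direct limit $\varinjlim \, {}_i U_p$ carries the common algebraic structure of its direct system, transported through the canonical homomorphisms ${}_i \phi_p$. Thus the family $\mathcal{U}:= \lbrace \varinjlim \, {}_i U_p \rbrace_i$ is already a collection of algebraic structures covering $\varinjlim X$ by definition of the union, and the proposition becomes a corollary of the general construction principle developed in Section 1 and Section 2.

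First, I would package each $\varinjlim \, {}_i U_p$ with the structure map data inherited from the common algebraic structure of the system, so that the prescription
\[
\varinjlim \, {}_i U_p \xmapsto{f_s} \lbrace g_1(\varinjlim \, {}_i U_p), g_2(\varinjlim \, {}_i U_p), g_3(\varinjlim \, {}_i U_p) \rbrace
\]
is a well-defined structure map in the sense of Definition \ref{Def:2.2}: operations, encoding functions of the satisfied properties, and (possibly empty) non-algebraic data are all forced on $\varinjlim \, {}_i U_p$ by the requirement that each ${}_i \phi_p$ be a homomorphism, exactly as in the discussion preceding the statement. Second, I would topologize $\varinjlim X$ by applying the construction of Example \ref{Ex:1.1} to the family $\mathcal{U} \cup \lbrace \emptyset, \varinjlim X \rbrace$: close under arbitrary unions, then under finite intersections, then again under arbitrary unions, obtaining the smallest topology $\tau$ making every $\varinjlim \, {}_i U_p$ open. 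Third, I would verify the hypotheses of Proposition \ref{Prop:1.1}: every point of $\varinjlim X$ lies, by definition of the union, in at least one $\varinjlim \, {}_i U_p \in \tau$, and each such open set is, by construction, an algebraic structure. Thus $(\varinjlim X, f_s)$ is a structured space.

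The only genuine obstacle is a bookkeeping one: Definition \ref{Def:1.2} requires each fixed neighborhood to contain strictly more than one point, and a priori some $\varinjlim \, {}_i U_p$ could collapse to a single equivalence class under $\sim$. This is ruled out by the implicit nontriviality assumption already invoked in Theorem \ref{Thm:2.1} (each element of the covering family must have more than one element); alternatively, any degenerate summand can be absorbed into the 1-generalised structured space framework of Example \ref{Ex:1.4}. Modulo this caveat, the argument is inherited mechanically from the previously established machinery, and no substantial difficulty arises.
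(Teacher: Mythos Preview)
Your proposal is correct and follows essentially the same approach as the paper: both fix the direct limits $\varinjlim \, {}_i U_p$ as the neighborhoods, invoke Proposition \ref{Prop:1.1} with the topology of Example \ref{Ex:1.1}, and assign to each direct limit the common algebraic structure of its system via the structure map. Your version is more explicit about the structure-map data and adds a caveat on single-point neighborhoods, but the core argument is the same.
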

\begin{proof}
\normalfont We can fix the neighborhoods $\varinjlim \, _i U_p$ and use Proposition \ref{Prop:1.1}, together with the local structures defined on each fixed neighborhood, to conclude that $X$ is a structured space w.r.t. the structure map that assigns to each $\varinjlim \, _i U_p$ the local structure of every $_i U_p \in \mathcal{C}_i$.
\end{proof}
We can summarise briefly (see the previous propositions for the precise statements) what said up to now in the following:
\begin{theorem}\label{Thm:3.1} The following statements holds true:\\
(i) Finite products of structured spaces are structured spaces;\\
(ii) If for every local structure of a structured space we consider an isomorphic/homomorphic/... structure, then the set given by the union of all these structures is a structured space (and it is isomorphic/homomorphic/... to the former space);\\
(iii) The quotient of a structured space by a structured ideal space is a structured space;\\
(iv) Unions of direct limits form structured spaces.
\end{theorem}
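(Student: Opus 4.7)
The plan is simply to read off each of the four clauses from the corresponding proposition that was already proved in this section, so the work amounts to a bookkeeping argument plus one small inductive step.

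For clause (i), Proposition \ref{Prop:3.1} handles the case of two factors, producing a structure map $f_{1,2}$ on $X\times Y$ from $f_1$ and $f_2$. To pass to a finite product $X_1\times\cdots\times X_n$, I would argue by induction on $n$: write $X_1\times\cdots\times X_n=(X_1\times\cdots\times X_{n-1})\times X_n$, apply the inductive hypothesis to the first factor and Proposition \ref{Prop:3.1} to the resulting pair, and observe that the product topology on the $n$-fold product agrees with the iterated product topology, so the local neighborhoods of the form $U_{p_1}\times\cdots\times U_{p_n}$ really are neighborhoods in the intended topology. The structure map is constructed iteratively in the same component-wise manner as in Proposition \ref{Prop:3.1}; I would emphasize that the associativity up to homeomorphism of the product makes this well-defined (independent of the order in which we pair factors).

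For clauses (ii) and (iii), there is essentially nothing new to do beyond citing Propositions \ref{Prop:3.2} and \ref{Prop:3.3}, respectively: the former shows that replacing each $U_p$ by an isomorphic/homomorphic image $V_p$ and unioning yields a structured space sharing the structure map $f$, together with the stated isomorphism/homomorphism; the latter shows that $\bigcup U_p/\widehat U_p$ carries the same structure map once we topologize it via Proposition \ref{Prop:1.1}. In the write-up I would be explicit that the hypotheses of those propositions are precisely what is being assumed in (ii) and (iii), so that the invocation is legitimate.

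For clause (iv), I would quote Proposition \ref{Prop:3.4}: given the collections $\mathcal{C}_i=\{{}_iU_p\}$ whose direct limits $\varinjlim\,{}_iU_p$ inherit the common algebraic structure of their direct systems, the space $\bigcup_i\varinjlim\,{}_iU_p$ is structured with each $\varinjlim\,{}_iU_p$ fixed as a local neighborhood, the topology being produced by Proposition \ref{Prop:1.1}.

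The only nontrivial step, and the one I would expect to have to defend, is the inductive passage from binary to finite products in (i); everything else is a direct appeal to the propositions already established. If a reader objected to the associativity-up-to-homeomorphism remark, I would fall back on redoing the proof of Proposition \ref{Prop:3.1} verbatim for $n$ factors, defining $g_1$, $g_2$, $g_3$ component-wise across all $n$ coordinates, which makes the $n$-ary case as transparent as the binary one.
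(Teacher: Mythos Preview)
Your proposal is correct and matches the paper's own approach: the paper does not give a separate proof for Theorem~\ref{Thm:3.1} at all, but simply introduces it as a summary (``see the previous propositions for the precise statements''), so your plan of citing Propositions~\ref{Prop:3.1}--\ref{Prop:3.4} is exactly what the paper intends. Your inductive step to pass from binary to finite products in clause~(i) is a small elaboration the paper leaves implicit, but it is the natural way to bridge Proposition~\ref{Prop:3.1} (two factors) to the stated claim, and your fallback of redoing the component-wise construction for $n$ factors directly is equally valid.
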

We conclude this section with an important question which arises when studying algebraic structures: is it possible to classify in some way these structures? In particular, here we mean that a certain kind of structure is such that we can find a list of sets endowed with the same structure satisfing the following properties:\\
(i) every space endowed with that kind of algebraic structure is isomorphic to one and only one space on the list\\
(ii) there is no space on the list that is isomorphic to another set on the list.\\
We will call such a list a 'classification' of that kind of algebraic structure. There are many possible examples in this context. We just recall a few:\\
1) finitely generated abelian groups (which are all isomorphic, by the fundamental theorem of finitely generated abelian groups, to a group of the form $\mathbb{Z}^n \oplus \mathbb{Z}/k_1 \mathbb{Z} \oplus ... \oplus \mathbb{Z}/k_r \mathbb{Z}$ ($n$ is the rank, and the $k_i$'s are powers of not necessarily distinct prime numbers); see also [19] for more details on the theory of abelian groups);\\
2) Finite simple groups (the list containing cyclic groups, sporadic groups, ...);\\
3) Finite dimensional C$^*$-algebras (if $A$ is a C$^*$-algebra, then it is isomorphic to $\oplus_ {e \in \min A} Ae$, where $\min A$ indicates the set of minimal nonzero self-adjoint central projections of $A$);\\
... (see also the reference (in particular, [20-36]) for some papers on classification of various algebraic structures). So, it is natural to ask whether we can classify also structured spaces. By now, it should be clear how we will answer this question:
\begin{theorem}[Classification of structured spaces] \label{Thm:3.2}
Suppose that the fixed neighborhoods $U_p$ of a structured space $(X,f)$ can all be classified. Then the space $X$ can also be classified considering all the structured spaces isomorphic to it (where we consider the corresponding isomorphic $\widehat{U}_p$'s to be in the classification list of the respective local structures).
\end{theorem}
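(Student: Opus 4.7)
The plan is to build the classification list for $X$ by replacing each fixed local structure $U_p$ with its unique representative from the classification list of the corresponding kind of algebraic structure, and then verify that the resulting collection of structured spaces satisfies properties (i) and (ii) from the definition of classification just above the theorem.

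First, for each $U_p \in \mathcal{U}$, since its algebraic structure admits a classification by hypothesis, there exists a unique element $\widehat{U}_p$ in the classification list of that kind of structure such that $U_p$ is isomorphic to $\widehat{U}_p$. I would then form the structured space $\widehat{X}:=\bigcup_p \widehat{U}_p$, and invoke Proposition \ref{Prop:3.2} directly to conclude that $(\widehat{X},f)$ is a structured space isomorphic to $(X,f)$. The proposed classification list for $X$ is the collection of all structured spaces obtained in this way, i.e.\ by taking, for each index $p$, an element from the classification list for the algebraic structure assigned to $U_p$ and forming the corresponding union with the topology from Proposition \ref{Prop:1.1}.

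Next I would check condition (i): given any structured space $(Y,f)$ with fixed neighborhoods $\{V_p\}$ of the same respective algebraic types as the $U_p$'s, the local classification hypothesis selects, for each $p$, a unique $\widehat{V}_p$ with $V_p \cong \widehat{V}_p$. Since $\widehat{V}_p$ lies in the same classification list as $\widehat{U}_p$ and both lists have the uniqueness property, the composition of the isomorphisms $V_p \cong \widehat{V}_p$ across $p$ exhibits $Y$ as isomorphic (as a structured space, in the local sense defined before Proposition \ref{Prop:3.2}) to exactly one element of the proposed list. For condition (ii), if two distinct members of the list were isomorphic as structured spaces, the local definition of isomorphism would force each pair of corresponding local representatives to be isomorphic in their common algebraic classification list; but property (ii) of each local classification forces the two representatives to coincide, so the two list members agree componentwise and are therefore equal.

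The main obstacle, and the point where I would need to be most careful, is the bookkeeping around the map $h:\mathcal{U}\to\mathcal{V}$ appearing in the definition of isomorphism of structured spaces just before Proposition \ref{Prop:3.2}. Two structured spaces on the list could in principle be isomorphic via a reordering $h$ that pairs $U_p$ with $V_q$ for $p\neq q$, so the argument for (ii) really requires that such a reordering still be compatible with the componentwise classification; this forces one to work with the equivalence class of indexings rather than a specific enumeration, and to restrict the list to one representative per such reindexing orbit. Once this is incorporated into the precise description of the list (so that two list members differing only by relabelling of indices are identified), the uniqueness argument above goes through, and the classification of $X$ follows.
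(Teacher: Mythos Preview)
Your approach is essentially the same as the paper's, which simply states that the result follows from Proposition \ref{Prop:3.2} and refers to Example \ref{Ex:3.1}. In fact you have supplied considerably more detail than the paper does, including a careful check of conditions (i) and (ii) and an explicit discussion of the reindexing issue for the map $h$, none of which the paper addresses.
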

\begin{proof}
\normalfont This easily follows from Proposition \ref{Prop:3.2}. See also the example below.
\end{proof}
\begin{example}\label{Ex:3.1}
\normalfont Consider a structured space $(X,f)$ whose fixed local structures are only finite dimensional C$^*$-algebras and finitely generated abelian groups. Then, by the isomorphisms above, we know that:
$$ X \cong \bigcup \oplus_ {e \in \min A} Ae \cup \bigcup(\mathbb{Z}^n \oplus \mathbb{Z}/k_1 \mathbb{Z} \oplus ... \oplus \mathbb{Z}/k_r \mathbb{Z})$$
(where $\cong$ means isomorphic here), i.e. the space is isomorphic to the structured space whose local structures are some of the ones in the corresponding lists.
\end{example}
\section{Some measure-theoretic properties of structured spaces}
In this section we study some properties of some particular kinds of structured spaces. We first introduce an important and natural concept:
\begin{definition}\label{Def:4.1}
A structured space $(X,f)$ is partitionable if $U_p \cap U_j = \emptyset$ whenever $U_j \neq U_p$, with $U_p, U_j \in \mathcal{U}$, and with $\mathcal{U}$ supposed to be a countable collection of sets.
\end{definition}
It is clear that the collection $\mathcal{U}$ of a partitionable space $(X,f)$ is then a partition of the set $X$. We will consider various kinds of spaces related to partitionable ones. First of all, we need a bit of preparation.\\
Since a structured space $(X,f)$ is also a topological space, we can define the Borel $\sigma$-algebra $\mathcal{B}(X)$ on it, and then we can consider a (positive) measure $\mu: \mathcal{B}(X) \rightarrow [0,+\infty]$. When we deal with measure theory in relation to structured spaces, if not otherwise specified, we always tacitly assume that $X$ is a measure space w.r.t. $\mathcal{B}(X)$ and with a positive measure on it (not necessarily finite). We will also assume that all the fixed neighborhoods of $X$ belong to $\mathcal{B}(X)$ (using the topology in Proposition \ref{Prop:1.1}, they belong to the Borel $\sigma$-algebra by its definition, since they are open sets in $\tau$).\\
Now, it is clear that a partitionable space is such that:
$$\mu(X)=\sum_{i=1}^{k} \mu(U_i)$$
(by assumption, $\mathcal{U}$ is countable, so it has a certain number $k$ of elements, with $k$ possibly equal to $+\infty$) because the union of the $U_i$'s is $X$ and they are all disjoint. However, the converse is not true in general: we could have $\mathcal{U}$ with neighborhoods whose intersections have zero measure under $\mu$. This suggests the following:
\begin{definition}\label{Def:4.2}
A structured space $(X,f)$ is called $\mu$-local almost partitionable (or, more briefly, $\mu$-LA partitionable) if there exist a countable subcollection $\mathcal{C}$ of $\mathcal{U}$, say $\mathcal{C}=\lbrace U_i \rbrace_{i=1}^{k}$ (with $k$ possibly equal to $+\infty$), and a subset $A$ of $X$ (not necessarily an algebraic structure/structured space), with $\mu(A)=0$ (so we also assume $A \in \mathcal{B}(X)$), such that:\\
1) All the $U_i$'s in $\mathcal{C}$ are pairwise disjoint: $U_i \cap U_j = \emptyset$ whenever $i \neq j$;\\
2) $X$ can be written as:
\begin{equation}\label{Eq:4.1}
X= A \cup \bigcup_{i=1}^{k} U_i
\end{equation}
3) The following holds:
\begin{equation}\label{Eq:4.2}
\mu(X)=\sum_{i=1}^{k} \mu(U_i)
\end{equation}
(where, using Proposition \ref{Prop:1.1}, we assume that $U_i \in \mathcal{B}(X)$ $\forall i$).
\end{definition}
Partitionable spaces are always $\mu$-LA partitionable, whichever is the measure $\mu$ on $\mathcal{B}(X)$. It is clear that, if 1) and 2) above are satisfied, and if at least one of the $U_i$'s in $\mathcal{C}$ has infinite measure under $\mu$, then $X$ is $\mu$-LA partitionable, since \eqref{Eq:4.2} certainly holds.\\
Before starting to study some properties of these spaces, we prove the following simple:
\begin{lemma}\label{Lm:4.1}
Suppose $(X,f)$ is a structured space, say with $\mathcal{U}=\lbrace U_p \rbrace$. Consider any subcollection $\widehat{\mathcal{U}}$ of $\mathcal{U}$. Then, defining:
$$\widehat{X}:=\bigcup_{U_t \in \widehat{\mathcal{U}}} U_t$$
we have that $(\widehat{X},f|_{\widehat{\mathcal{U}}})$ is a structured space.
\end{lemma}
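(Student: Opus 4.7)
The plan is to reduce the statement directly to Proposition \ref{Prop:1.1}, noting that the algebraic and topological data already present on $X$ restrict to $\widehat{X}$ in the obvious way. First I would equip $\widehat{X}$ with a topology $\widehat{\tau}$ by applying Proposition \ref{Prop:1.1} to the collection $\widehat{\mathcal{U}}$: every $U_t\in\widehat{\mathcal{U}}$ is declared open, and $\widehat{\tau}$ is generated from this (exactly as in Example \ref{Ex:1.1}). This is legitimate because each $U_t\in\widehat{\mathcal{U}}$ is, by assumption, an algebraic structure (inherited from its role as a fixed neighborhood of $X$ under $f$), and has more than one point, so the hypotheses of Proposition \ref{Prop:1.1} are satisfied verbatim.

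Next I would verify that every point $p\in\widehat{X}$ has a neighborhood in $\widehat{X}$ which carries an algebraic structure. By the definition $\widehat{X}=\bigcup_{U_t\in\widehat{\mathcal{U}}}U_t$, any such $p$ lies in at least one $U_t\in\widehat{\mathcal{U}}$; by construction of $\widehat{\tau}$ this $U_t$ is open in $\widehat{X}$ and contains $p$, so it qualifies as a neighborhood in the sense of Definition \ref{Def:1.1}. Choosing (and fixing) this $U_t$ as the fixed neighborhood of $p$ in $\widehat{X}$, we have exhibited an element of $\widehat{\mathcal{U}}$ which is an algebraic structure containing $p$, as required by Definition \ref{Def:1.2}.

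It remains to check that $f|_{\widehat{\mathcal{U}}}$ is a bona fide structure map on $\widehat{X}$, which amounts to observing that the three components $g_1,g_2,g_3$ of $f$ restrict to surjective maps $g_i|_{\widehat{\mathcal{U}}}:\widehat{\mathcal{U}}\to g_i(\widehat{\mathcal{U}})$ onto their images; this is automatic since the codomains $\mathcal{T}_1,\mathcal{T}_2,\mathcal{T}_3$ in Section 2 are by definition the images of all fixed neighborhoods, so restricting the domain simply restricts the image. Hence $(\widehat{X},f|_{\widehat{\mathcal{U}}})$ satisfies Definition \ref{Def:2.3}.

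The only conceivable obstacle is a topological subtlety: a reader might expect $\widehat{X}$ to inherit the subspace topology from $X$ rather than the one from Proposition \ref{Prop:1.1}. However, since the lemma does not specify a particular topology on $\widehat{X}$, and since the topology on $X$ itself was constructed in the same Proposition \ref{Prop:1.1} style in the examples developed so far, using the analogous construction on $\widehat{\mathcal{U}}$ is both consistent and, in fact, the natural choice; each $U_t$ is automatically open, which is what the argument requires. Thus no real obstacle arises and the result follows.
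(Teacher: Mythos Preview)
Your argument is correct and, like the paper, reduces immediately to Proposition~\ref{Prop:1.1}. The one point of divergence is precisely the one you flag at the end: you generate a fresh topology $\widehat{\tau}$ on $\widehat{X}$ from the collection $\widehat{\mathcal{U}}$, whereas the paper instead equips $\widehat{X}$ with the subspace topology inherited from $X$ and then invokes Proposition~\ref{Prop:1.1}. Both choices work, since in the paper's standing convention the $U_p$ are open in $X$, hence each $U_t\in\widehat{\mathcal{U}}$ is also open in the subspace topology; your construction has the mild advantage of not relying on that convention, while the paper's choice keeps $\widehat{X}$ topologically compatible with $X$. So your anticipation that the subspace topology would \emph{not} be used is the only misstep, and it is inconsequential for the validity of the proof.
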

\begin{proof}
Since $\widehat{X} \subseteq X$, we can use the subspace topology. Then, it is clear by Proposition \ref{Prop:1.1} that $\widehat{X}$ is a structured space w.r.t. $f|_{\widehat{\mathcal{U}}}$.
\end{proof}
Thus, the union of all the $U_i$'s in Definition \ref{Def:4.2} is a structured space. However, as we have already noticed, $A$ is not required to be (in general) a structured space. Consider $Y:=X \setminus \bigcup_{i=1}^{k} U_i \subseteq A$ (clearly, $\mu(Y)=0$, since $Y=(\bigcup_{i=1}^{k} U_i)^c \in \mathcal{B}(X)$ (by definition of $\sigma$-algebra) and $Y \subseteq A$, with $\mu(A)=0$). We would like to add some points to $Y$ (and hence to $X$) so that $Y$ becomes an algebraic structure. We also want this extension to have zero measure under some natural extension of $\mu$. Therefore, we define:
\begin{definition}\label{Def:4.3}
Let $Y$ be as above. Suppose that $Z$ is a set such that:\\
(i) $Z \cap X = \emptyset$\\
(ii) $Z \cup Y$ is an algebraic structure\\
(iii) Letting $\widehat{\tau}$ be the extension topology defined over $Z \cup X$, suppose that $\mu$ can be extended to $\widehat{\mu}:\mathcal{B}(X \cup Z) \rightarrow [0,+\infty]$ (i.e. $\mu \equiv \widehat{\mu}$ over $\mathcal{B}(X) \subseteq \mathcal{B}(X \cup Z)$)\\
Then we say that $Z$ is a $\mu$-null addition to $X$.
\end{definition}
Let $S$ be a topological space (say, with topology $\gamma$) and let $P$ be a set which is disjoint from $S$. We recall that the extension topology is defined as the topology whose open sets are of the form $K \cup Q$, where $K \in \gamma$ and $Q$ is a subset of $P$.\\
We can now prove the following interesting extension result:
\begin{proposition}\label{Prop:4.1}
If $Z$ is a $\mu$-null addition to a $\mu$-LA partitionable space $(X,f)$, say w.r.t. a $\mu$-LA collection $\mathcal{C}=\lbrace U_i \rbrace_{i=1}^{k}$, then $\mathcal{C} \cup \lbrace Y \cup Z \rbrace$ is a partition for $X \cup Z$, and thus $X \cup Z$ is partitionable.
\end{proposition}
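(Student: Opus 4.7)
The plan is to verify directly the two things the conclusion requires: first, that the collection $\mathcal{C} \cup \lbrace Y \cup Z \rbrace$ is a set-theoretic partition of $X \cup Z$; second, that this collection qualifies as the family of fixed neighborhoods of a genuine structured space in the sense of Definition \ref{Def:4.1}. Both are essentially bookkeeping once the hypotheses are unpacked.

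For the partition claim, I would first check pairwise disjointness. The $U_i$'s are disjoint among themselves by clause (1) of Definition \ref{Def:4.2}. For any $i$, $U_i \cap (Y \cup Z) = (U_i \cap Y) \cup (U_i \cap Z)$; the first piece is empty because $Y = X \setminus \bigcup_j U_j$ is disjoint from every $U_j$ by construction, and the second piece is empty because $U_i \subseteq X$ while clause (i) of Definition \ref{Def:4.3} gives $Z \cap X = \emptyset$. For the union, write
\[
\Bigl( \bigcup_{i=1}^{k} U_i \Bigr) \cup (Y \cup Z) = \Bigl( \bigcup_{i=1}^{k} U_i \cup Y \Bigr) \cup Z = X \cup Z,
\]
where the last equality uses $Y = X \setminus \bigcup_i U_i$, so $\bigcup_i U_i \cup Y = X$.

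For the structured-space claim, I would first endow $X \cup Z$ with the extension topology $\widehat{\tau}$ from Definition \ref{Def:4.3}, and then, if needed, refine it using the construction of Proposition \ref{Prop:1.1} applied to the collection $\mathcal{C} \cup \lbrace Y \cup Z \rbrace$ so that every $U_i$ and $Y \cup Z$ is open. Each $U_i$ is already an algebraic structure by hypothesis, and $Y \cup Z$ is an algebraic structure by clause (ii) of Definition \ref{Def:4.3}, so every point of $X \cup Z$ lies in a fixed neighborhood that is an algebraic structure: points in $U_i$ get $U_i$, and points in $Y$ or $Z$ both get $Y \cup Z$. The structure map $f$ extends naturally to a map $\widehat{f}$ that agrees with $f$ on each $U_i$ and assigns to $Y \cup Z$ the algebraic structure from the null addition. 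Since $\mathcal{C}$ was assumed countable, appending one set keeps the collection countable, which is required by Definition \ref{Def:4.1}.

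Combining both halves gives exactly what Definition \ref{Def:4.1} demands: $(X \cup Z, \widehat{f})$ is a structured space whose countable collection of fixed neighborhoods $\mathcal{C} \cup \lbrace Y \cup Z \rbrace$ consists of pairwise disjoint algebraic structures covering $X \cup Z$, so $X \cup Z$ is partitionable. The only mildly delicate point is the topological one: $Y$ itself need not be open in the original $\tau$, so $Y \cup Z$ need not be open in the extension topology. This is precisely where Proposition \ref{Prop:1.1} is invoked to produce an admissible topology on $X \cup Z$ in which the enlarged collection consists of open neighborhoods; once that is done, nothing nontrivial remains, and no measure-theoretic hypothesis beyond $\mu(Y)=0$ is actually used in the partition argument itself (the measure plays its role only in justifying that the $\mu$-null addition is an admissible way to enlarge $X$).
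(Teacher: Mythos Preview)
Your proposal is correct and follows essentially the same route as the paper: verify that $\bigcup_i U_i \cup (Y\cup Z)=X\cup Z$, check pairwise disjointness using $Y=X\setminus\bigcup_i U_i$ and $Z\cap X=\emptyset$, and observe that each member of the collection is an algebraic structure. You are more explicit than the paper on two points the paper leaves implicit or glosses over---the disjointness of $U_i$ from $Y\cup Z$ and the topological issue that $Y\cup Z$ need not be a neighborhood of its points in the bare extension topology (which you patch via Proposition~\ref{Prop:1.1})---but the underlying argument is the same.
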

\begin{proof}
By the definition of $Y$, we have that:
$$ Y \cup Z \cup \bigcup_{i=1}^{k} U_i = (X \setminus \bigcup_{i=1}^{k} U_i) \cup Z \cup \bigcup_{i=1}^{k} U_i = X \cup Z$$
By (i) in Definition \ref{Def:4.3} and by the pairwise disjointness of the $U_i$'s, we thus conclude that the collection $\mathcal{C} \cup \lbrace Y \cup Z \rbrace$ is a partition of the \emph{set} $X \cup Z$. Since all the $U_i$'s are algebraic structure, and also $Y \cup Z$ is (by (ii) in \ref{Def:4.3}) an algebraic structure, we can conclude that $\mathcal{C} \cup \lbrace Y \cup Z \rbrace$ is a partition for the structured space (under the extension topology) $X \cup Z$.
\end{proof}
This result says that a $\mu$-LA partitionable space can always (if there exists at least one $\mu$-null addition) be extended to a partitionable space, in such a way that the extension has still the same measure under $\widehat{\mu}$. Thus, under some mild conditions, the theory of $\mu$-LA partitionable spaces can be reduced to the theory of partitionable spaces.\\
Another interesting kind of space which is similar to $\mu$-LA partitionable ones is defined as follows. Let $(X,f)$ be a structured space and consider a subcollection $\mathcal{C}:=\lbrace U_p \rbrace \subseteq \mathcal{U}$. Suppose that, letting $U_p \cap U_j=: A_{p,j}$ (from which we immediately note that $A_{p,j} \equiv A_{j,p}$), we have
$$ \mu(A_{p,j})=0 $$
for every $U_p \neq U_j$ in $\mathcal{C}$. Then, if:
$$X=\bigcup_{U_p \in \mathcal{C}} U_p$$
we say that $X$ is a $\mu$-union of $\mathcal{C}$. Notice that, if $X$ is a $\mu$-union of a countable $\mathcal{C}$, then \eqref{Eq:4.2} holds.\\
For example, if $\mathcal{P}$ is any partition of a partitionable space $(X,f)$, then $X$ is a $\mu$-union of $\mathcal{P}$, whichever is $\mu: \mathcal{B}(X) \rightarrow [0,+\infty]$.\\
We now additionaly suppose that the $U_p$'s in $\mathcal{C}$ have all the local structures of $X$. This means that $f|_{\mathcal{C}} \equiv f$. We call such a collection $\mathcal{C}$ a $\mu$-complete restriction (more briefly, $\mu$-CR) for the structured space $(X,f)$. Henceforth, we will consider $\mu$-CDR ($\mu$-completely distinguished partitions), which are $\mu$-CR such that all the $U_p$'s in $\mathcal{C}$ all have different structures (thus, $\mathcal{C}$ is composed by neighborhoods in $\mathcal{U}$ whose union is $X$, and such that they have all the local structures of $X$; furthermore, the structures of any $U_p$ is different from the structures of all the other $U_j$'s in $\mathcal{C}$). It is clear that $\mathcal{U} \setminus \mathcal{C}$ is "not so important" to $X$, because we can form $X$ only using the $U_p$'s in $\mathcal{C}$, and furthermore we have all the local structures of $X$ contained in $\mathcal{C}$. Indeed, we can prove that (here, '$\setminus$' with collections of sets work precisely in the same way as for sets: if $\mathcal{A}=\lbrace A, B \rbrace$ and $\mathcal{B}=\lbrace B \rbrace$ for some sets $A,B$, then $\mathcal{A} \setminus \mathcal{B}=\lbrace A \rbrace$):
\begin{proposition}\label{Prop:4.2}
Let $\mathcal{C}$ be a $\mu$-CDR for the structured space $(X,f)$. Then, if $U_p \in \mathcal{U} \setminus \mathcal{C}$ is the fixed neighborhood of $p$, we can replace it with an extension (supposing there exist $\mu$-null maintaining additions (i.e. the extended space maintain the same structure) contained in $X$ and containing $p$, for all the $U_j \in \mathcal{C}$ and for all the necessary $p \in X$) of the unique element of $\mathcal{C}$ with the same local structure, where the extension has the same measure of the neighborhood itself (so, this extension is a neighborhood of $p$ with an algebraic structure). The new collection containing also these extensions is not, in general, $\mu$-CRD, nor even a $\mu$-union for $X$.
\end{proposition}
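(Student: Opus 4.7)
The plan is to extract from the $\mu$-CDR hypothesis the data needed to perform the replacement, then build the extension using the assumed $\mu$-null maintaining addition, and finally record why the resulting collection generally loses both the CDR and the $\mu$-union properties. First I would unwind the definition of $\mu$-CDR: since $f|_{\mathcal{C}} \equiv f$, every local structure realised in $X$ is already realised on some $U_j \in \mathcal{C}$, so for the given $U_p \in \mathcal{U} \setminus \mathcal{C}$ there is at least one $U_j \in \mathcal{C}$ with $f(U_j) \equiv f(U_p)$; the 'completely distinguished' clause then yields uniqueness of this $U_j$, because no two elements of $\mathcal{C}$ share a structure.

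Next I would apply the standing hypothesis that $\mu$-null maintaining additions exist inside $X$ containing $p$ for every $U_j \in \mathcal{C}$ and every $p$ we need to treat. Picking such a set $W \subseteq X$ with $p \in W$ relative to $U_j$ and setting $\widetilde{U}_j := U_j \cup W$, I would verify three things in turn: by the 'maintaining' clause, $\widetilde{U}_j$ carries the same local structure as $U_j$, hence the same as the original $U_p$; by the '$\mu$-null' clause, $\mu(W \setminus U_j) = 0$, so $\mu(\widetilde{U}_j) = \mu(U_j)$, matching the measure prescribed in the statement; and since $p \in W \subseteq \widetilde{U}_j$ and the topology on $X$ from Proposition \ref{Prop:1.1} lets us arrange $W$ to contain an open set through $p$, Definition \ref{Def:1.1} is satisfied and $\widetilde{U}_j$ is a bona fide neighborhood of $p$. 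Replacing $U_p$ by $\widetilde{U}_j$ in $\mathcal{U}$ then yields the desired modified collection.

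Finally, to justify the negative remark I would simply inspect the new collection: it now contains both $U_j$ and $\widetilde{U}_j$, which share a local structure, so the 'completely distinguished' clause is broken and the collection fails to be $\mu$-CDR; moreover $\widetilde{U}_j \cap U_j = U_j$, which has measure $\mu(U_j) > 0$ in typical cases, so the pairwise $\mu$-null intersection condition defining a $\mu$-union also fails. The main obstacle in this plan is not algebraic but one of coherence: the parenthetical hypothesis postulates $\mu$-null maintaining additions simultaneously for every $U_j \in \mathcal{C}$ and every relevant $p$, so one must check that the choices of $W$ can be made compatibly across replacements, and in particular that requiring $\widetilde{U}_j$ to be an open neighborhood of $p$ does not clash with the structural preservation clause. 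That compatibility is exactly what the parenthetical defers, and in a full write-up I would isolate it as a side lemma.
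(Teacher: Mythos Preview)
Your proposal is correct and follows essentially the same route as the paper: identify the unique $U_j\in\mathcal{C}$ with $f(U_j)\equiv f(U_p)$, adjoin a $\mu$-null maintaining addition containing $p$ to obtain a same-structure, same-measure neighborhood of $p$, and then observe that the enlarged collection contains both $U_j$ and its extension, which kills the distinguished clause and the $\mu$-null-intersection condition. The only cosmetic difference is that the paper first separates off the trivial case in which $p$ already lies in $U_j$ (so no extension is needed), and it is terser about the final failure clause, whereas you spell out explicitly that $\widetilde{U}_j\cap U_j=U_j$; your added coherence caveat is a reasonable side remark but is not addressed in the paper.
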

\begin{proof}
\normalfont Since
$$X=\bigcup_{U_p \in \mathcal{C}} U_p$$
any $j\in X$ such that $U_j \in \mathcal{U} \setminus \mathcal{C}$ must belong to one of the $U_p$'s in $\mathcal{C}$. Then, we can think of using one of the $U_p \in \mathcal{C}$ instead of $U_j$. We have two cases: if $j$ belongs to the $U_p \in \mathcal{C}$ with the same structure of $U_j$, we are done. Otherwise, by assumption we can extend the $U_p \in \mathcal{C}$ with the same structure of $U_j$ using a $\mu$-null maintaining addition $Z_j$ (we note that the possibility to do this is due the fact that $j$ is not "too far in measure" from $U_p$, beacuse indeed $\mu(Z_j)=0$, so the extension is not "large" in measure). Since, by definition of $\mu$-null maintaining addition, $U_p \cup Z_j$ is an algebraic structure, and in particular it is the same algebraic structure of $U_p$ (think about, for instance, extension fields: we extend an algebraic structure adding some points and maintaining the same structure. For example, $\mathbb{Q}(\sqrt{2}):=\lbrace a + b \sqrt{2}, a,b \in \mathbb{Q} \rbrace$ is an extension field of $\mathbb{Q}$. Furthermore, if we consider the measure space $\mathbb{R}$ with Lebesgue measure $\lambda$, we have that the "added part" $Z=\mathbb{Q}(\sqrt{2}) \setminus \mathbb{Q}$ has measure $0$ under $\lambda$. Thus, $Z=\mathbb{Q}(\sqrt{2}) \setminus \mathbb{Q}$  is an example of $\mu$-null maintaining addition), and furthermore this extension is contained in $X$ (always by hypothesis), we can consider the collection $\widehat{\mathcal{C}}$ defined by adding to $\mathcal{C}$ all these extensions.\\
It is clear that the intersections of the elements of $\widehat{\mathcal{C}}$ do not necessarily have measure $0$ under $\mu$, so $X$ is not, in general, a $\mu$-union of $\widehat{\mathcal{C}}$.
\end{proof}
The new collection obtained is, in some sense, more "natural" than $\mathcal{U}$, since it does not consider "too many" fixed neighborhoods as the original one. It only contains the "necessary" ones, with the addition of some extensions which are "almost the same" (in measure) of the previous ones. This collection $\mathcal{\widehat{C}}$ will be called the $\mu$-essential part of $\mathcal{U}$.\\
We also note another interesting example. It can be useful to consider structured spaces having the following property: if $p$ is "near" $q$, then we would like to have $U_p$ with the same structure of $U_q$. The notion of distance needed can be achieved without necessarily using metrics: if we consider a certain measure $\mu$ on the structured space $X$, we can think that $p$ is $\mu$-near $q$ if $\mu(U_p \cap U_q)>0$. Then, we can define:
\begin{definition}\label{Def:4.4}
A structured space $X$ is called locally $\mu$-homogeneous if the following property is satisfied: whenever $U_p \neq U_q$ and $\mu(U_p \cap U_q) > 0$ (which also imply that the two neighborhoods are not disjoint), $U_p$ and $U_q$ have the same algebraic structure (i.e. $f_s(U_p) \equiv f_s(U_q)$, where '$\equiv$' has been defined in Section 2.5). A structured space is called globally $\mu$-homogeneous if the following stronger property is satisfied ($U_p \neq U_q$): $\mu(U_p \cap U_q)>0$ $\Leftrightarrow$ $f_s(U_p) \equiv f_s(U_q)$.
\end{definition}
This kind of space can be useful in applications, since it is sometimes more natural to consider the same kind of algebraic structure for points that are "near" in some sense (here, in the sense of $\mu$). We notice that, even if two sets are disjoint, their measure under $\mu$ can be $0$: this makes the above definition more interesting, since we do not only ask their intersection to be nonempty.
\begin{example}\label{Ex:4.1}
\normalfont Consider the structured space $X$ defined in Example \ref{Ex:1.2}. Since the fixed neighborhoods are the $GL(n, \mathbb{R})$'s in that case, it is clear that $X$ is locally $\mu$-homogeneous, whichever is the positive measure $\mu$. It is also clear that $X$ is not globally $\mu$-homogeneous.
\end{example}
\begin{example}\label{Ex:4.2}
\normalfont If a structured space $X$ is a $\mu$-union of $\mathcal{U}$, then it is clear that $X$ is locally $\mu$-homogeneous, since $\mu(U_p \cap U_q)=0$ for every $U_p \neq U_q$ by assumption. Thus, also partitionable spaces are locally $\mu$-homogeneous. It is also clear that they need not be globally $\mu$-homogeneous. However, if a structured space $X$ is $\mu$-CRD w.r.t. $\mathcal{U}$, then it is globally $\mu$-homogeneous.
\end{example}
We conclude this section with a discussion about the local structures of a structured space $X$, which will lead us to prove one of the most important results of this paper. Intuitively, a point $x$ which is contained in some fixed $U_p$'s in $\mathcal{U}$ "have" all the local structures of those $U_p$'s. This means that the point $x$ is used (for instance, relatively to operations) in all those structures. We can then think of distinguish the points which are contained (and hence "used") in more local structures than others. This can be done by assigning to each $x \in X$ the subcollection of $\mathcal{U}$ containing all the fixed neighborhoods that contain $x$:
\begin{equation}\label{Eq:4.3}
x \xmapsto{h} \lbrace U_p \in \mathcal{U} : x \in U_p \rbrace \subseteq \mathcal{U}
\end{equation}
with $h:X\rightarrow \mathcal{L}$, where $\mathcal{L}$ contains all the families of subcollections of $\mathcal{U}$ (the "power set", but with collections of sets) excluding the ones containing the empty set. Notice that:
\begin{equation}\label{Eq:4.4}
h(x)=\mathcal{U} \Leftrightarrow x \in \bigcap_{p\in X} U_p
\end{equation}
We now define a notion of order on $X$. The first attempt is to define '$\leq$' on $X$ using $h$:
$$x \leq y \Leftrightarrow h(x) \subseteq h(y)$$
However, even though this partial relation is clearly reflexive and transitive, it is not antisimmetric in general. We thus define the following equivalence relation $\sim$:
\begin{equation}\label{Eq:4.5}
x \sim y \Leftrightarrow h(x)=h(y)
\end{equation}
Then, $X / \sim$ is also antisymmetric and so it is a partially ordered set under $\leq$. One of the main result of this paper is that any structured space $X$ such that $h$ is surjective induces a lattice this way, and conversely every lattice induces a quotient $Y / \sim_1$ for some structured space $Y$ and some equivalence relation $\sim_1$ (we explicitely note that $\sim_1$ is generally not the same of $\sim$, and indeed in this converse statement we will not use (in the proof) the partial order $\leq$ defined above, but the one defined on the lattice by its definition), and this can be done in such a way that the function $\widehat{h}$ obtained is surjective. This is the statement of the next important:
\begin{theorem}[Structured spaces-lattices]\label{Thm:4.1}
Every structured space $X$ with surjective $h$ is such that $X / \sim$ is a lattice under some partial order $\leq$, where $\sim$ is an appropriate equivalence relation. Conversely, every lattice can be written as $Y / \sim_1$ for some equivalence relation $\sim_1$ and some structured space $Y$, and these can be chosen so that the corresponding $\widehat{h}$ is surjective.
\end{theorem}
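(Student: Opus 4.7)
The plan is to prove the two directions separately. The forward direction is a fairly mechanical translation of the set-theoretic operations on subcollections of $\mathcal{U}$ into lattice operations on $X/\sim$, while the converse is a construction problem where one exploits the freedom in choosing both $Y$ and $\sim_1$ to simultaneously meet surjectivity of $\widehat{h}$ and the isomorphism-with-$L$ requirement.

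For the forward direction I would first verify that $[x]\le[y]\iff h(x)\subseteq h(y)$ is a well-defined partial order on $X/\sim$ (with $\sim$ as in \eqref{Eq:4.5}): well-definedness and the reflexive/transitive axioms are inherited from $\subseteq$, and antisymmetry is precisely the reason for quotienting by $\sim$. The heart of the argument is the construction of binary meets and joins. For classes $[x],[y]$, the natural candidate for the join has $h$-value $h(x)\cup h(y)$, a non-empty subcollection of $\mathcal{U}$ lying in $\mathcal{L}$; by surjectivity of $h$ there exists $z_\vee\in X$ with $h(z_\vee)=h(x)\cup h(y)$, and the universal property of unions gives that $[z_\vee]$ is the least upper bound of $\{[x],[y]\}$. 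The meet is handled symmetrically via $h(x)\cap h(y)$; the only subtle case is $h(x)\cap h(y)=\emptyset$, where one either adjoins a formal bottom to $X/\sim$ or reads ``lattice'' as ``lattice with possibly missing bottom''. Either convention is standard and is compatible with the surjectivity hypothesis.

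For the converse, given any lattice $(L,\preceq,\wedge,\vee)$ I would construct $Y$ so as to force surjectivity of $\widehat{h}$ by hand. For every non-empty subset $S\subseteq L$ introduce a distinct point $y_S$, and for each $a\in L$ set $V_a:=\{y_S:a\in S\}$; equip each $V_a$ with a non-trivial algebraic structure (for instance a magma with an arbitrarily fixed binary operation, possible since $|V_a|=2^{|L|-1}>1$), and topologise $Y:=\bigcup_a V_a$ via Proposition \ref{Prop:1.1} so that the $V_a$'s become its fixed neighborhoods. By construction $\widehat{h}(y_S)=\{V_a:a\in S\}$, so $\widehat{h}$ is surjective onto the non-empty subcollections of $\{V_a\}_{a\in L}$. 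Next, let $r$ be a choice function picking a representative $r(S)\in S$ from each non-empty $S\subseteq L$ (via a fixed well-ordering of $L$, say), and define $\sim_1$ by $y_S\sim_1 y_T\iff r(S)=r(T)$; the singletons $S=\{a\}$ ensure that every $a\in L$ is realised as $[y_{\{a\}}]$, giving a bijection $Y/\sim_1\to L$, and transporting $(\preceq,\wedge,\vee)$ along this bijection endows $Y/\sim_1$ with the required lattice structure.

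The main obstacle I anticipate is reconciling the two different partial-order pictures: the canonical order $\le$ used in the forward direction is induced by the subcollection structure of $\mathcal{U}$, whereas the converse imports the lattice order from $L$ and therefore does not touch $\le$ at all. The cleanest way to handle this is to emphasise (as the author himself does in the statement) that in the converse the quotient $Y/\sim_1$ is merely a bookkeeping device producing the underlying set of $L$, with the lattice operations transported from $L$; this decoupling is exactly what allows the construction to succeed for arbitrary lattices. After that the only remaining technicality is the empty-intersection issue in the forward direction, which I would dispatch by the convention mentioned above.
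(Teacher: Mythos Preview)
Your forward direction follows the paper's line exactly: same $\sim$, same order $\leq$, joins realised via $h(x)\cup h(y)$ and meets via $h(x)\cap h(y)$, with surjectivity of $h$ supplying the required preimages. The one substantive divergence is your treatment of the case $h(x)\cap h(y)=\emptyset$. Both of your proposed fixes alter the statement rather than prove it: adjoining a formal bottom replaces $X/\sim$ by a strictly larger poset, and a ``lattice with possibly missing bottom'' is still not a lattice (binary meets, not a global bottom, are what is at stake). The paper does not patch the conclusion; it argues that the empty case cannot occur under the surjectivity hypothesis. Since $\mathcal{U}\in\mathcal{L}$, surjectivity of $h$ produces some $z$ with $h(z)=\mathcal{U}$, whence by \eqref{Eq:4.4} one has $\bigcap_p U_p\neq\emptyset$, and from this the paper concludes that $h(a)\cap h(b)$ is never empty (so the intersection lies in $\mathcal{L}$ and a preimage exists). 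You should pursue that route rather than weaken the target.

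For the converse you take a genuinely different and much heavier path. The paper's construction is nearly trivial: a lattice $L$ is itself an algebraic structure (operations $\vee$ and $\wedge$), hence a structured space whose single fixed neighborhood is $L$; set $Y=L$ and let $\sim_1$ be equality, so that $Y/\sim_1=L$ literally. With only one fixed neighborhood the codomain of $\widehat{h}$ is the one-element set $\{\{L\}\}$, and surjectivity is automatic. Your construction (points $y_S$ indexed by non-empty $S\subseteq L$, neighborhoods $V_a$, and a choice-function quotient) does yield a structured space with surjective $\widehat{h}$, but it buys nothing over the paper's version: in the end you still transport the lattice order from $L$ along a bijection, so the large $\mathcal{L}$ you built is never used. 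It also carries a small edge case you did not address ($|L|=1$ forces $|V_a|=1$, violating the ``more than one point'' requirement on fixed neighborhoods), which the paper's one-line construction sidesteps entirely.
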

\begin{proof}
\normalfont We prove the first part by constructing the equivalence relation $\sim$ and the partial order $\leq$, which are actually the ones defined above. We only need to verify that $X / \sim$ is a lattice under the order $\leq$. We start checking that it is a join semilattice. Consider any set $\lbrace a,b \rbrace$ with $a,b \in X$. We need to prove that $\lbrace a,b \rbrace$ is such that there exists a least upper bound (if it exists, it is well known that this must be unique), i.e., if we consider the set of all the upper bounds of $\lbrace a,b \rbrace$, which is $Z(a,b):=\lbrace x\in X : s \leq x, s\in \lbrace a, b \rbrace \rbrace$, we have to prove that there exists an element in this set, say $a \vee b$, such that $a \vee b \leq x$ whichever is the upper bound $x\in Z(a,b)$. By "unpacking" the definition of $\leq$, we notice that $Z(a,b)$ constists of the $x \in X$ that are contained at least in all the $U_p$'s in which $a$ is contained and also in all the $U_j$'s in which $b$ is contained. This means that we could think of the least upper bound as the collection $h(a) \cup h(b)$. However, it is not necessary (in general) that there is at least one $y$ in $X$ such that $h(y)=h(a) \cup h(b)$. The assumption of the Theorem on $h$ (surjectivity onto the "power collection" without the empty sets) implies that, whichever is the collection $\mathcal{C}$ in $\mathcal{L}$, there is certainly at least one $y \in X$ such that $h(y)= \mathcal{C}$. Thus, there always exists the least upper bound, which is $a \vee b = y$, with $y$ such that $h(y)=h(a) \cup h(b)$. The least upper bounds of these sets with two values are called joins. We conclude that $X/ \sim$ is a join semilattice under $\leq$. We notice that, without $\sim$, $\leq$ would not be in general a partial order and the least upper bound would not be necessarily unique.\\
Similarly, by the surjectivity of $h$, we can conclude that the greatest lower bound $a \wedge b = y$, with $y$ such that $h(y)=h(a) \cap h(b)$, always exists (notice that, by the surjectivity of $h$ and by \eqref{Eq:4.4}, $\bigcap_p U_p \neq \emptyset$. This implies that the intersection of $h(a)$ and $h(b)$ is never the empty set, and of course this intersection collection does not contain the empty set because $h(a)$ and $h(b)$ do not contain it by the definitions of $h$ and $\mathcal{L}$), so $X/ \sim$ is also a meet semilattice. Thus, $X/ \sim$ is a lattice.\\
To prove the second part, we construct some $Y$ and $\sim_1$ satisfing the said conditions. More precisely, we consider $Y$ to be $X$ itself, and $\sim_1$ to be simply equality: $x \sim_1 y \Leftrightarrow x=y$. It is then clear that $Y / \sim_1$ is in fact $X$, and so it is a lattice. $\sim_1$ is obviously an equivalence relation, and $Y$ is a lattice (because it is equal to $X$), and therefore it is an algebraic structure (the operations given by $\vee$ and $\wedge$) and hence it is a structured space. Thus, we only need to verify that the corresponding $\widehat{h}$ is surjective. But this is trivial, since $\widehat{h}$ only maps to $\lbrace X \rbrace$, and then is clearly surjective.
\end{proof}
\section{Connectedness}
We conclude this paper with some simple results on connectedness. We first define:
\begin{definition}\label{Def:5.1}
A structured space is completely open (closed) if all the fixed neighborhoods $U_p \in \mathcal{U}$ are open (closed). Here with open (closed) we also consider the possibility that they are clopen: the important thing is that they are at least open (closed).
\end{definition}
Before proceeding, we recall some definitions:
\begin{definition}\label{Def:5.2}
A topological space $X$ is \emph{connected} if it cannot be represented as the union of two or more disjoint non-empty open subsets. Otherwise, it is \emph{disconnected}.\\
A topological space $X$ is \emph{hyperconnected} if no two non-empty open sets are disjoint.\\
A topological space $X$ is \emph{ultraconnected} if no two non-empty closed sets are disjoint.
\end{definition}
We can now prove the following result:
\begin{proposition}\label{Prop:5.1}
The following statements hold true:\\
(i) Every partitionable space which is completely open is disconnected;\\
(ii) If a structured space is completely open (closed) and hyperconnected (ultraconnected, respectively), then it is not partitionable.
\end{proposition}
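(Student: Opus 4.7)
The plan is to exploit directly the fact that a partitionable space decomposes $X$ as a disjoint union of the fixed neighborhoods in $\mathcal{U}$, and then to read off either a separation (for (i)) or a contradiction with (hyper/ultra)connectedness (for (ii)).

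For part (i), I would start from Definition \ref{Def:4.1}: since $X$ is partitionable, its countable family $\mathcal{U}=\{U_p\}$ consists of pairwise disjoint nonempty sets (each contains more than one point by the very definition of structured space) whose union is $X$. Because $X$ is completely open, every $U_p$ is open. Picking any $U_{p_0}\in\mathcal{U}$ and setting $V:=\bigcup_{p\neq p_0}U_p$ (open as a union of open sets), we obtain $X=U_{p_0}\sqcup V$ with both pieces open and nonempty (assuming $|\mathcal{U}|\geq 2$, which is the only case in which the statement has content; the single-neighborhood case yields the vacuous reading $\mathcal{U}=\{X\}$ and would have to be excluded). This exhibits the required separation, so $X$ is disconnected.

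For part (ii), I would argue by contradiction. Suppose the completely open, hyperconnected space $(X,f)$ is partitionable. Then, as in (i), $\mathcal{U}$ furnishes at least two pairwise disjoint nonempty open sets $U_p, U_q$, which directly violates hyperconnectedness (Definition \ref{Def:5.2}). The completely closed, ultraconnected case is dual: the same two elements $U_p, U_q\in\mathcal{U}$ are now closed, nonempty, and disjoint, violating ultraconnectedness. In both cases the contradiction gives the claim.

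The proof is essentially immediate once one unpacks the definitions; no genuine obstacle arises. The only subtlety is the degenerate case $|\mathcal{U}|=1$ (where $U_{p_0}=X$), which makes the partition trivial and the statements vacuous; I would either note this explicitly as an implicit hypothesis or point out that Definition \ref{Def:1.2} together with partitionability in any nontrivial example forces $|\mathcal{U}|\geq 2$. Beyond this bookkeeping, both items follow from a single observation: under complete openness (or closedness), a partition of $X$ by the fixed neighborhoods \emph{is} a topological separation by open (closed) sets.
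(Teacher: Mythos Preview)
Your proposal is correct and follows essentially the same approach as the paper: both arguments unpack Definition~\ref{Def:4.1} to see that $\mathcal{U}$ gives a disjoint cover by nonempty open (resp.\ closed) sets, then read off disconnectedness in (i) and a contradiction with hyperconnectedness/ultraconnectedness in (ii). Your version is a bit more explicit (constructing the separation $X=U_{p_0}\sqcup V$ and flagging the degenerate case $|\mathcal{U}|=1$), but the underlying idea is identical.
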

\begin{proof}
(i) By definition of partitionable space, the space is the union of all its $U_p$'s, which are open (and disjoint) by hypothesis. Thus, by Definition \ref{Def:5.2}, it must be disconnected.\\
(ii) Suppose a structured space is completely open and hyperconnected. Then the definition of hyperconnectedness together with the assumption that all the $U_p$'s are open imply that the fixed neighborhoods are not disjoint, so the space cannot be partitionable. The proof of the other statement is similar.
\end{proof}
This gives some criteria to check if a structured space is partitionable.
\section{Conclusion}
In this paper we have introduced a new kind of space which locally resembles some algebraic structures. We have precisely described the fixed local structures with the structure map, which makes the notions of structured space and algebraic structure unambiguous. We have then shown some examples of constructions of structured spaces starting from other structured spaces, and we have proceeded with some properties related to partitionable and $\mu$-almost partitionable spaces. We have finally proved one of the main result of this article, Theorem \ref{Thm:4.1}, which shows a really interesting and important relation between structured spaces and lattices. We have then concluded with some simple properties related to connectedness. We notice the similarity of the structure map (and the modified structure map) and of the map $h$ defined before Theorem \ref{Thm:4.1} with vector bundles and presheaves: in those cases, we attach to each point/open subset something (a vector space, or a set of sections and restriction morphisms, for instance), while here we can see the similar process of assigning to each fixed neighborhood $U_p$ (or to each $p \in X$) a certain algebraic structure via $f_s$ (or via $\widehat{f}_s$), and to each point of a decomposable space all of its local structures via $h$.\\
\\
\textbf{Acknowledgements}: I thank Helge Gl\"ockner for his useful comments on the first version of this paper. His suggestions helped me improve various parts of this article.\\
\\
\textbf{Conflict of Interest}: The author declares that he has no conflict of interest.\\
\\
\begin{large}
\textbf{References}
\end{large}
\\
$[1]$ Cohen, F.R.; Stafa, M. (2016). A Survey on Spaces of Homomorphisms to Lie Groups. In: Callegaro, F.; Cohen, F.; De Concini, C.; Feichtner, E.; Gaiffi, G.; Salvetti, M. (eds) Configuration Spaces. Springer INdAM Series, vol 14. Springer, Cham\\
$[2]$ De la Harpe, P. (2000). Topics in geometric group theory. Chicago lectures in mathematics. University of Chicago Press.\\
$[3]$ Tu, L. W. (2008). An Introduction to Manifolds, 2nd edition. Springer.\\
$[4]$ Cohn, D. L. (2013). Measure Theory, Second Edition. Birkh\"auser, New York, NY\\
$[5]$ Hungerford, T. W. (2012). Abstract Algebra: An Introduction, Third Edition. Cengage Learning\\
$[6]$ Kelly, J. C. (1963). Bitopological spaces. Proc. London Math. Soc., 13(3) 71-89\\
$[7]$ Sakai, S. (1971). C$^*$-algebras and W$^*$-algebras. Springer\\
$[8]$ Kempf, G. R. (1995). Algebraic Structures. Vieweg+Teubner Verlag\\
$[9]$ Birkhoff, G. (1967). Lattice Theory, 3rd ed. Vol. 25 of AMS Colloquium Publications. American Mathematical Society.\\
$[10]$ Davey, B. A.; Priestley, H. A. (2002). Introduction to Lattices and Order, Cambridge University Press\\
$[11]$ Gl\"ockner, H. (2007). Direct limits of infinite-dimensional Lie groups compared to
direct limits in related categories. J. Funct. Anal. 245, 19-61\\
$[12]$ Gl\"ockner, H. (2011). Direct limits of infinite-dimensional Lie groups, pp. 243-280 in:
Neeb, K. - H.; Pianzola, A. (Eds.). "Developments and Trends in Infinite Dimensional Lie Theory", Progr. Math. 288, Birkh\"auser, Boston.\\
$[13]$ Gl\"ockner, H. (2019). Direct limits of regular Lie groups. Preprint (arXiv:1902.06329)\\
$[14]$ Mac Lane, S. (1998). Categories for the Working Mathematician, Graduate Texts in Mathematics, 5 (2nd ed.), Springer-Verlag\\
$[15]$ Segal, I. (1947). Irreducible representations of operator algebras. Bulletin of the American Mathematical Society, 53 (2), 73-88.\\
$[16]$ Doran, R. S.; Belfi, V. A. (1986). Characterizations of C$^*$-algebras: the Gelfand-Naimark Theorems. CRC Press\\
$[17]$ Atiyah, M. F. (1989). K-theory. Advanced Book Classics (2nd ed.); Addison-Wesley\\
$[18]$ Bredon, G. E. (1997). Sheaf theory. Graduate Texts in Mathematics, 170 (2nd ed.), Berlin, New York: Springer-Verlag\\
$[19]$ Strickland, N. (2020). Algebraic theory of abelian groups. Preprint (arXiv:2001.10469)\\
$[20]$ Ara\'ujo, J.; Cameron, P. J.; Mitchell, J.; Neunhoffer, M. (2012). The classification of normalizing groups\\
$[21]$ Tahar, G. (2017). Ordered algebraic structures and classification of semifields. Preprint (	arXiv:1709.06923)\\
$[22]$ R\'ua, I. F.; Combarro, E. F.; Ranilla, J. (2009). Classification of semifields of order 64. J. of Algebra. Vol 322, Issue 11, 4011-4029\\
$[23]$ Winkler, P. M. (1980). Classification of algebraic structures by work space. Algebra Universalis 11, 320-333\\
$[24]$ Chapoton, F. (2004). Classification of some simple graded pre-Lie algebras of growth one. Commun. Algebra. 32(1): 243-251\\
$[25]$ Xu, C. (2019). Compatible left-symmetric algebraic structures on high rank Witt and Virasoro algebras. Preprint (arXiv:1910.13644)\\
$[26]$ Wu, H.; Yuan, L. (2017). Classification of finite irreducible conformal modules over some Lie conformal algebras related to the Virasoro conformal algebra. J. Math. Phys. 58(4): 041701\\
$[27]$ Kong, X.; Chen, H.; Bai, C. (2011). Classification of graded left-symmetric algebraic structures on Witt and Virasoro algebras. Int. J. Math. 22(2): 201-222\\
$[28]$ Ismailov, N.; Kaygorodov, I.; Mashurov, F. (2020). The Algebraic and Geometric Classification of Nilpotent Assosymmetric Algebras. Algebr Represent Theor\\
$[29]$ Abdelwahab, H.; Calderón, A.J.; Kaygorodov, I. (2019). The algebraic and geometric classification of nilpotent binary Lie algebras. International Journal of Algebra and Computation 29(6), 1113-1129\\
$[30]$ Calder\'on Mart\'in, A.; Fern\'andez Ouaridi, A.; Kaygorodov; I. (2019). The classification of $n$-dimensional anticommutative algebras with $(n-3)$-dimensional annihilator. Communications in Algebra 47(1), 173-181\\
$[31]$ Calder\'on Mart\'in, A.; Fern\'andez Ouaridi, A.; Kaygorodov, I. (2018). The classification of $2$-dimensional rigid algebras, Linear and Multilinear Algebra\\
$[32]$ Darijani, I.; Usefi, H. (2016). The classification of $5$-dimensional $p$-nilpotent restricted Lie algebras over perfect fields, I. Journal of Algebra 464, 97-140\\
$[33]$ De Graaf, W. (2007). Classification of $6$-dimensional nilpotent Lie algebras over fields of characteristic not $2$. Journal of Algebra 309(2), 640-653\\
$[34]$ De Graaf, W. (2018). Classification of nilpotent associative algebras of small dimension. International Journal of Algebra and Computation 28(1), 133-161\\
$[35]$ Demir, I.; Misra, K.; Stitzinger, E. (2017). On classification of four-dimensional nilpotent Leibniz algebras. Communications in Algebra 45(3), 1012-1018\\
$[36]$ Camacho, L. M.; Karimjanov, I.; Kaygorodov, I.; Khudoyberdiyev, A. (2020). One-generated nilpotent Novikov algebras, Linear and Multilinear Algebra\\
$[37]$ Bengtsson, I.; Zyczkowski, K. (2006). Complex projective spaces. In Geometry of Quantum States: An Introduction to Quantum Entanglement (pp. 102-134). Cambridge: Cambridge University Press\\
$[38]$ Besse, A. L. (1978). Manifolds all of whose geodesics are closed, Ergebnisse der Mathematik und ihrer Grenzgebiete (Results in Mathematics and Related Areas), 93, Berlin, New York: Springer-Verlag\\
$[39]$ S. Akbulut. (1994). On quotients of complex surfaces under complex conjugation. J. Reine Angew.
Math. 447, 83-90\\
$[40]$ V. I. Arnold. (1998). The branched covering $\mathbb{C}P \rightarrow S^4$, hyperbolocity and projective topology, Sibirsk. Mat. Zh. 29, no. 5, 36-47\\
$[41]$ V. Kharlamov. (1976). The topological type of nonsingular surfaces in $\mathbb{RP}^3$ of degree $4$, Funkc. Anal. i Prilozhen. 10, no. 4, 55-68 (Russian); English transl. in Functional Anal. Appl. 10, no. 4, 295-305\\
$[42]$  N. Kuiper. (1974). The quotient space of $\mathbb{CP}^2$ by complex conjugation is the $4$-sphere, Math. Ann. 208, 175-177\\
$[43]$  V. Kharlamov (1972). The maximal number of components of a $4$th degree surface in $\mathbb{RP}^3$, Funkc. Anal. i Prilozhen. 6, no. 4, 101\\
$[44]$ V. Kharlamov. (1984). On the classification of non-singular surfaces of degree $4$ in $\mathbb{RP}^3$ with respect to rigid isotopies, Funct. Anal. Appl. 18, no. 1, 49-56\\
$[45]$ V. Kharlamov. (1981). On a number of components of an M-surface of degree $4$ in $\mathbb{RP}^3$, Proc. of XVI soviet algebraic conference, Leningrad, 353-354\\
$[46]$ Degtyarev, A.; Kharlamov, V. (2000). Topological properties of the real algebraic varieties: du cote chez rokhlin\\
$[47]$ Botvinnik, B. (2010). Notes on the course 'Algebraic Topology'. University of Oregon\\
$[48]$ Chan, V. (2013). Topological K-Theory of Complex Projective Spaces. Preprint (arXiv:1303.3959)\\
$[49]$ Conrad, B. (1973). On Manifolds with the Homotopy Type of Complex Projective Space. Transactions of the American Mathematical Society, 176, 165-180\\
$[50]$ Brasselet, J. P. (2007). Singularities in geometry and topology: proceedings of the Trieste Singularity Summer School and Workshop, ICTP, Trieste, Italy, 15 August - 3 September 2005. World Scientific Publishing Company Pte Limited\\
$[51]$ Carlstr\"om, Jesper (2004). Wheels - On Division by Zero. Mathematical Structures in Computer Science, Cambridge University Press, 14 (1): 143-184\\
$[52]$ Lundell, A. T.; Weingram, S. (1970). The topology of CW-complexes. Van Nostrand University Series in Higher Mathematics\\
$[53]$ Hatcher, Allen (2002). Algebraic topology. Cambridge University Press\\
$[54]$ Elter, E.; Lienhardt. P. (1994). Cellular complexes as structured semi-simplicial sets. International Journal of Computational Geometry $\&$ Applications, 1(2):191-217\\
$[55]$ Wall, C. (1965). Finiteness Conditions for CW-Complexes. Annals of Mathematics, 81(1), second series, 56-69\\
$[56]$ Wall, C. T. (1966). Finiteness conditions for CW complexes. II. Proceedings of the Royal Society of London. Series A. Mathematical and Physical Sciences, 295, 129-139\\
$[57]$ Whitehead, J. H. C. (1949). Combinatorial homotopy. I., Bull. Amer. Math. Soc., 55, 213-245\\
$[58]$ Whitehead, J. H. C. (1949). Combinatorial homotopy. II., Bull. Amer. Math. Soc., 55, 453-496\\
$[59]$ Milnor, J. (1959). On spaces having the homotopy type of a CW-complex. Trans. Amer. Math. Soc. 90: 272-280\\
$[60]$ L\"uck, W. (2005). Survey on classifying spaces for families of subgroups. In Infinite groups: geometric, combinatorial and dynamical aspects, volume 248 of Progr. Math., pages 269-322. Birkh\"auser, Basel\\
$[61]$ Moerdijk, I. (1995). Classifying spaces and classifying topoi, Lecture Notes in Math. 1616, Springer-Verlag, New York\\
$[62]$ Milnor, J. W.; Stasheff, J. D. (1974). Characteristic classes, Princeton Univ. Press\\
$[63]$ Madsen, J.; Milgram, R. J. (1979). The classifying spaces for surgery and cobordism of manifolds, Princeton Univ. Press\\
$[64]$ Baker, A.; Clarke, F.; Ray, N.; Schwartz, L. (1989). On the Kummer congruences and the stable homotopy of $BU$, Trans. Amer. Math. Soc., American Mathematical Society, 316 (2): 385-432\\
$[65]$ Adams, J. F. (1974). Stable Homotopy and Generalised Homology, University Of Chicago Press\\
$[66]$ Eilenberg, S.; MacLane, S. (1945). Relations between homology and homotopy groups of spaces. Ann. of Math., 46, 480-509\\
$[67]$ Eilenberg, S.; MacLane, S. (1950). Relations between homology and homotopy groups of spaces. II. Ann. of Math., 51, 514-533\\
$[68]$ Adhikari, M. R. (2016). Eilenberg-MacLane Spaces. In: Basic Algebraic Topology and its Applications. Springer, New Delhi\\
$[69]$ Anderson, D. W.; Hodgkin, L. (1968). The K-theory of Eilenberg-Maclane complexes, Topology, Volume 7, Issue 3, 317-329\\
$[70]$ Kleiman, S. (1969). Geometry on grassmanians and applications to splitting bundles and smoothing
cycles, Publ. Math. I.H.E.S. 36, 281-297\\
$[71]$ Zanella, C. (1995). Embeddings of Grassmann spaces. J Geom 52, 193-201\\
$[72]$ Bichara, A.; Tallini, G. (1982). On a characterization of the Grassmann manifold representing the planes in a projective space. Combinatorial and geometric structures and their applications (Trento, 1980), Ann. Discr. Math.14, 129-149\\
$[73]$ Bichara, A.; Tallini, G. (1983). On a characterization of Grassmann space representing the h-dimensional subspaces in a projective space. Combinatorics '81 (Rome, 1981), Ann. Discr. Math.18, 113-131\\
$[74]$ Tallini, G. (1981). On a characterization of the Grassmann manifold representing the lines in a projective space. Finite geometries and designs (Chelwood Gate, 1980), London Math. Soc. Lecture Note Ser., 49, Cambridge University Press, 354-358\\
$[75]$ Wells, A. L. Jr. (1983). Universal projective embeddings of the Grassmannian, half spinor, and dual orthogonal geometries. Quart. J. Math. Oxford (2)34, 375-386\\
$[76]$ Licata, D. R.; Finster, E. (2014). Eilenberg-MacLane spaces in homotopy type theory. CSL-LICS '14

\end{document}